\newtheorem{lemma}{Lemma}[section]
\newtheorem{theorem}[lemma]{Theorem}
\newtheorem{prop}[lemma]{Proposition}
\newtheorem{cor}[lemma]{Corollary}
\theoremstyle{definition}
\newtheorem{example}[lemma]{Example}
\newtheorem{definition}[lemma]{\sl Definition}
\theoremstyle{remark}
\numberwithin{equation}{section}
\begin{document}

\title{Witt's theorem for noncommutative conic curves}
\author{A. Nyman}
\address{Department of Mathematics, 516 High St, Western Washington University, Bellingham, WA 98225-9063}
\email{adam.nyman@wwu.edu}
\keywords{}
\thanks{2010 {\it Mathematics Subject Classification. } Primary 14A22, 14H45; Secondary 16S38}

\begin{abstract}
Let $k$ be a field.  We extend the main result in \cite{tsen} to show that {\it all} homogeneous noncommutative curves of genus zero over $k$ are noncommutative $\mathbb{P}^{1}$-bundles over a (possibly) noncommutative base.  Using this result, we  compute complete isomorphism invariants of homogeneous noncommutative curves of genus zero, allowing us to generalize a theorem of Witt.
\end{abstract}

\maketitle

\pagenumbering{arabic}

\section{Introduction}
Throughout this paper, $k$ will denote a field over which everything is defined.  In particular, all categories and equivalences of categories will be $k$-linear, and all bimodules will be $k$-central.

Recall that if $X$ is a smooth projective curve of genus zero with canonical bundle $\omega_{X}$ and a $k$-rational point $P$, then there is a commutative diagram of isomorphisms
\begin{equation} \label{eqn.comdig}
\xymatrix{
X \ar[rr]  \ar[d] & &  \operatorname{Proj }(\bigoplus_{j}H^{0}(X,{\omega_{X}^{*}}^{\otimes j})) \\
\mathbb{P}^{1} \cong \operatorname{Proj }(\bigoplus_{i}H^{0}(X,{\mathcal{O}_{X}(P)}^{\otimes i})) \ar[rr] & & \operatorname{Proj }(\bigoplus_{j}H^{0}(X,{\mathcal{O}_{X}(P)}^{\otimes 2j})) \ar[u]
}
\end{equation}
whose top horizontal is the anti-canonical embedding, whose left vertical is the embedding induced by the invertible sheaf $\mathcal{O}_{X}(P)$, whose bottom horizontal is the Veronese embedding, and whose right vertical is induced by an isomorphism $\mathcal{O}_{X}(P)^{\otimes 2} \cong \omega_{X}^{*}$.

The purpose of this paper is to show that a version of (\ref{eqn.comdig}) exists when $X$ is any {\it noncommutative} curve of genus zero (see (\ref{eqn.ncdig}) below), and to use this fact to generalize a theorem of Witt to the noncommutative context (Corollary \ref{cor.newwitt}).  It will follow that even if $X$ is a smooth commutative curve of genus zero without a $k$-rational point, it is isomorphic to a noncommutative projective line, in contrast to what happens in the purely commutative situation.  This illustrates the increased flexibility present in the noncommutative setting.

In order to make sense of the noncommutative version of (\ref{eqn.comdig}), we proceed to describe noncommutative versions of the commutative objects in (\ref{eqn.comdig}).  A {\it noncommutative curve of genus zero} \cite{kussin} is a small abelian, noetherian category $\sf H$ which has a tilting bundle, an object of infinite length, an Auslander-Reiten translation and has the property that for all simple objects $\mathcal{S}$ in ${\sf H}$, we have $\operatorname{Ext}_{{\sf H}}^{1}(\mathcal{S},\mathcal{S}) \neq 0$.  In \cite{kussin}, this last condition makes the curve  {\it homogeneous}, but we shall omit this term for simplicity.  The motivating example is the category of coherent sheaves over a smooth projective curve of genus zero.

Next, we recall the definition of a noncommutative version of $\mathbb{P}^{1}$ due to M. Van den Bergh \cite{vandenbergh}.  If $K$ and $L$ are finite extensions of $k$ and $N$ is a $K-L$-bimodule of finite dimension as both a $K$-module and an $L$-module, then one can form the $\mathbb{Z}$-algebra $\mathbb{S}^{nc}(N)$, the noncommutative symmetric algebra of $N$.  The {\it noncommutative $\mathbb{P}^{1}$-bundle generated by $N$}, $\mathbb{P}^{nc}(N)$, is defined to be the quotient of the category of graded right $\mathbb{S}^{nc}(N)$-modules modulo the full subcategory of direct limits of right bounded modules (see \cite{stafford} for more details on this quotient category construction).  Below, we will write ${\sf coh }\mathbb{P}^{nc}(N)$ for the full subcategory of noetherian objects.

In order to describe the noncommutative analogue of the isomorphism in the lower left corner of (\ref{eqn.comdig}), we first recall the main result of \cite{tsen}, which relates the concept of a noncommutative curve of genus zero to the notion of noncommutative $\mathbb{P}^{1}$-bundle.  To this end, we state some preliminaries.  Let ${\sf H}$ be a noncommutative curve of genus zero.  The full subcategory ${\sf H}_{0}$ of objects of ${\sf H}$ of finite length is a Serre subcategory, and the quotient category ${\sf H}/{\sf H}_{0}$ is equivalent to ${\sf mod }k({\sf H})$, the category of right modules over a division ring $k({\sf H})$ \cite[Section 1]{lenzing}.  The {\it rank} of an object in ${\sf H}$ is defined to be the dimension of the image of the object under the quotient functor.  A {\it vector bundle} in ${\sf H}$ is an object without a simple subobject, and a {\it line bundle} is a rank one vector bundle.  Suppose $\mathcal{L}$ is a line bundle on ${\sf H}$.  Then there exists an indecomposable bundle $\overline{\mathcal{L}}$ and an irreducible morphism $\mathcal{L} \longrightarrow \overline{\mathcal{L}}$ coming from an AR sequence starting at $\mathcal{L}$.  Following \cite{kussin}, we let $M$ denote the {\it underlying bimodule of $\sf H$}, defined as ${}_{\operatorname{End }(\overline{\mathcal{L}})}\operatorname{Hom}_{{\sf H}}(\mathcal{L},\overline{\mathcal{L}})_{\operatorname{End }(\mathcal{L})}$.  The main result in \cite{tsen} is that if $\operatorname{End}(L)$ and $\operatorname{End}(\overline{L})$ are {\it commutative}, then
\begin{equation} \label{eqn.mainequiv}
\tilde{H} \equiv \mathbb{P}^{nc}(M)
\end{equation}
where $\tilde{H}$ is the unique locally noetherian category whose subcategory of noetherian objects is ${\sf H}$ and $\equiv$ denotes equivalence of categories.

There are many interesting noncommutative curves of genus zero not described by this result.  For example, if $X$ is a (commutative) nondegenerate conic without a $k$-rational point, then the indecomposable bundle $\overline{\mathcal{L}}$ has rank two, and $\operatorname{End}(\mathcal{\overline{L}})$ is a non-split quaternion algebra.  Furthermore, there exists a degree two extension $k'$ of $k$ such that $\operatorname{End}(\mathcal{\overline{L}}) \otimes_{k}k' \cong M_{2}(k')$ and $C_{k'} \cong \mathbb{P}_{k'}^{1}$ \cite[Remark 3.7]{conics}.

In this paper, we first show, in Section \ref{section.defn}, that the notion of $\mathbb{P}^{1}$-bundle still makes sense when the finite extensions $K$ and $L$ of $k$ are replaced by arbitrary noetherian $k$-algebras.  We then observe, in Section \ref{section.tsen}, that the proof of \cite[Theorem 3.10]{tsen} shows that the equivalence (\ref{eqn.mainequiv}) still holds in case $\operatorname{End}(L)$ and $\operatorname{End}(\overline{L})$ are division rings finite dimensional over $k$.  It will follow that (\ref{eqn.mainequiv}) applies to all noncommutative curves of genus zero, yielding the noncommutative analogue of the lower left isomorphism in (\ref{eqn.comdig}).  In fact, we have the following result (see Sections \ref{section.defn} and \ref{section.tsen} for more details):  If ${\sf H}$ is a noncommutative curve of genus zero with underlying bimodule $M$, then there exists a $\mathbb{Z}$-algebra $C$ such that the diagram
\begin{equation} \label{eqn.ncdig}
\xymatrix{
{\sf H} \ar[rr]  \ar[d] & &  \sf{proj }(\bigoplus_{j \geq 0}\operatorname{Hom}_{\sf H}(\tau^{j}\mathcal{L},\mathcal{L})) \\
{\sf coh }\mathbb{P}^{nc}(M) \equiv {\sf{proj }}(\bigoplus_{ij} C_{ij}) \ar[rr] & & {\sf{proj }}(\bigoplus_{ij}C_{2i2j}) \ar[u]
}
\end{equation}
whose top horizontal is the noncommutative anticanonical embedding, whose left vertical is the equivalence in \cite[Theorem 11.1.1]{stafford}, whose bottom horizontal is the $\mathbb{Z}$-algebra Veronese, and whose right vertical is induced from the 1-periodicity of the Veronese of $C$, commutes.

As a consequence of our generalization of (\ref{eqn.mainequiv}), we import from \cite{vandenbergh} an intrinsic notion of base change for noncommutative curves of genus zero, which allows us to generalize \cite[Remark 3.7]{conics} (see Example \ref{example.quaternion}).  We then use (\ref{eqn.mainequiv}) to classify noncommutative curves of genus zero up to isomorphism (Theorem \ref{thm.equivs}), generalizing \cite[Theorem 5.2]{nyman} and \cite[Propostion 5.1.4]{kussin}:

\begin{theorem}
For $i=1,2$, let $D_{i}$ and $E_{i}$ be division rings finite dimensional over $k$, let $M$ be a $D_{1}-D_{2}$-bimodule of left right dimension $(2,2)$ or $(1,4)$ and let $N$ be an $E_{1}-E_{2}$-bimodule of left right dimension $(2,2)$ or $(1,4)$.  There is an equivalence
$$
{\sf H}_{1} = \mathbb{P}^{nc}(M) \longrightarrow \mathbb{P}^{nc}(N) = {\sf H}_{2}
$$
if and only if either
\begin{enumerate}
\item{} there exist isomorphisms $\phi_{i}:D_{i} \rightarrow E_{i}$ and $\psi:M \rightarrow N$ such that
$$
\psi(a \cdot m \cdot b) = \phi_{1}(a) \cdot \psi(m) \cdot \phi_{2}(b),
$$
or

\item{} there exist isomorphisms $\phi_{1}:D_{1} \rightarrow E_{2}$, $\phi_{2}:D_{2} \rightarrow E_{1}$ and $\psi:M \rightarrow N^{*}$ such that
$$
\psi(a \cdot m \cdot b) = \phi_{1}(a) \cdot \psi(m) \cdot \phi_{2}(b).
$$
\end{enumerate}
\end{theorem}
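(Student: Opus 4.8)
The plan is to reduce the classification of equivalences $\mathbb{P}^{nc}(M) \to \mathbb{P}^{nc}(N)$ to a classification of isomorphisms of the underlying $\mathbb{Z}$-algebras $\mathbb{S}^{nc}(M)$ and $\mathbb{S}^{nc}(N)$, and then to translate such isomorphisms into bimodule data. First I would establish the ``if'' direction, which should be essentially formal: given the data in (1), the maps $\phi_i$ and $\psi$ induce an isomorphism of $\mathbb{Z}$-algebras $\mathbb{S}^{nc}(M) \to \mathbb{S}^{nc}(N)$ (using that the noncommutative symmetric algebra is functorial in the bimodule and its duals, since the relations defining $\mathbb{S}^{nc}(N)$ in each degree are built from $N$ and $N^{*}$ via canonical trace/coevaluation maps that are preserved by an isomorphism of bimodules twisted by ring isomorphisms), hence an equivalence on the quotient categories $\mathbb{P}^{nc}(M) \equiv \mathbb{P}^{nc}(N)$. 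For the data in (2), the point is that $\mathbb{S}^{nc}(N^{*})$ is the ``opposite-direction'' symmetric algebra, and there is a canonical identification of $\mathbb{P}^{nc}(N)$ with $\mathbb{P}^{nc}(N^{*})$ coming from the fact that the noncommutative $\mathbb{P}^{1}$-bundle is symmetric in the two rulings (equivalently, $\mathbb{S}^{nc}(N)$ and $\mathbb{S}^{nc}(N^{*})$ have the same ``$\mathrm{proj}$'' after reindexing); composing this identification with the isomorphism $\mathbb{S}^{nc}(M) \cong \mathbb{S}^{nc}(N^{*})$ induced as before gives the desired equivalence.

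For the ``only if'' direction, the strategy is to recover the bimodule $M$ intrinsically from the category ${\sf H}_1 = \mathbb{P}^{nc}(M)$ and track how an equivalence must act on this recovered data. Concretely, I would use the description of ${\sf H}_1$ as a noncommutative curve of genus zero: it has a family of line bundles $\mathcal{O}(n)$, $n \in \mathbb{Z}$, forming a ``helix'', with $\mathrm{End}(\mathcal{O}(n))$ alternating between $D_1$ and $D_2$ (up to the choice of which is which), and with $\mathrm{Hom}(\mathcal{O}(n),\mathcal{O}(n+1))$ recovering $M$ or $M^{*}$ as a bimodule over the relevant endomorphism rings. An equivalence $F: {\sf H}_1 \to {\sf H}_2$ preserves line bundles, AR sequences, and the Auslander-Reiten translate, hence sends the helix of ${\sf H}_1$ to the helix of ${\sf H}_2$; since $\mathbb{Z}$ has only the order-preserving and order-reversing automorphisms, $F$ either preserves the ordering of the helix or reverses it. In the order-preserving case, $F$ induces isomorphisms $\mathrm{End}(\mathcal{O}(n)) \to \mathrm{End}(F\mathcal{O}(n))$ compatible with the Hom-bimodules, which after a shift to align indices yields exactly the data $\phi_i, \psi$ of case (1). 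In the order-reversing case, $F$ intertwines $\mathrm{Hom}(\mathcal{O}(n),\mathcal{O}(n+1))$ on one side with $\mathrm{Hom}(F\mathcal{O}(n+1),F\mathcal{O}(n))$ on the other, and the latter is the dual bimodule; this produces the data of case (2). Throughout I would lean on the main result (\ref{eqn.mainequiv}) in the generality established in Section \ref{section.tsen}, which guarantees that the abstract category $\mathbb{P}^{nc}(M)$ really is a noncommutative curve of genus zero with underlying bimodule $M$, so that the intrinsic invariants $D_i$ and $M$ are genuinely determined by ${\sf H}_i$ up to the two ambiguities above.

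The main obstacle I anticipate is making precise and rigorous the claim that the underlying bimodule, together with the two endomorphism rings, is recovered \emph{functorially} from the category in a way that behaves predictably under equivalences — in particular, pinning down the exact source of the shift ambiguity (the helix is only canonical up to translation) and showing that this shift does not produce extra invariants beyond those listed. Relatedly, one must check that the order-reversing case genuinely corresponds to passing to $N^{*}$ rather than, say, to $N$ with the roles of $E_1$ and $E_2$ swapped in some other way; this requires unwinding the definition of the dual bimodule $N^{*} = \Hom_{E_2}(N, E_2)$ (or $\Hom_{E_1}(N, E_1)$ — one must verify these agree up to the relevant twist when $N$ has dimension vector $(2,2)$ or $(1,4)$) and matching it with $\mathrm{Hom}_{{\sf H}_2}(\mathcal{O}(n+1), \mathcal{O}(n))$. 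A secondary technical point is handling the two dimension types $(2,2)$ and $(1,4)$ uniformly: in the $(1,4)$ case $D_1$ is its own division ring of rank dividing that of $D_2$, and one should confirm that the AR-theoretic recovery of $M$ still goes through (this is exactly the content of the extension of \cite{tsen} promised in the introduction, so I would cite it rather than reprove it). Once these identifications are in place, the equivalence-versus-bimodule-data correspondence is a bookkeeping argument, and the theorem follows by combining the two directions.
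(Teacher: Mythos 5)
Your ``if'' direction is essentially the paper's: functoriality of $\mathbb{S}^{nc}$ under bimodule isomorphisms twisted by ring isomorphisms (Lemma \ref{lemma.functorality} together with Proposition \ref{prop.morita}), plus the equivalence $\mathbb{P}^{nc}(N) \equiv \mathbb{P}^{nc}(N^{*})$ (Corollary \ref{cor.mstar}) to handle case (2). Your overall strategy for ``only if'' — recover the bimodule from the helix $\{\mathcal{O}(n)\}$ intrinsically and track how a shift-ambiguity creates the two alternatives — is also the right skeleton. But there is a genuine conceptual error in how you identify the source of case (2), and it would cause your proof to fail if carried out literally.

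You attribute case (2) to an ``order-reversing'' automorphism of the helix. A covariant equivalence $F$ cannot reverse the ordering: $\operatorname{Hom}(\mathcal{O}(n),\mathcal{O}(n+1)) \neq 0$ always maps under $F$ to $\operatorname{Hom}(F\mathcal{O}(n),F\mathcal{O}(n+1))$, not to $\operatorname{Hom}(F\mathcal{O}(n+1),F\mathcal{O}(n))$, and if $F$ reversed the helix ordering the latter Hom-space would have to be nonzero while the former vanished — impossible. The actual mechanism is the \emph{parity} of the shift: $F$ sends $\mathcal{O}(n)$ to $\mathcal{O}'(n+m)$ for some fixed $m$, and the sequence of Hom-bimodules along the helix alternates (by repeated application of \cite[Proposition 3.4]{tsen}) as $M, M^{*}, M^{**}, \dots$. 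An even shift aligns $\operatorname{Hom}(\mathcal{O}(0),\mathcal{O}(1)) \cong M$ with a copy of $N$, giving case (1); an odd shift aligns it with a copy of $N^{*}$, giving case (2). Your ``order-preserving'' analysis, taken at face value, would only ever produce case (1), so case (2) would never arise and the theorem would be stated wrongly.

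Two further points you gloss over but the paper handles explicitly. First, in the $(1,4)$-case the two $\tau$-orbits of indecomposable bundles have different ranks (rank 1 for the $\mathcal{L}$-orbit, rank 2 for the $\overline{\mathcal{L}}$-orbit), so $F$ must send each orbit to the orbit of the same rank; this forces the shift to be even and makes case (2) impossible — consistent with the fact that $N^{*}$ would then be a $(4,1)$-bimodule, incompatible with $M$ of type $(1,4)$. Second, pinning the exponent precisely (showing that the integers $i$ and $j$ in $F(\mathcal{L}) \cong \tau^{i}\mathcal{L}'$ and $F(\overline{\mathcal{L}}) \cong \tau^{j}\overline{\mathcal{L}'}$ agree) requires the dimension comparison from \cite[Corollary 3.6]{tsen} together with the $k$-module isomorphism $\operatorname{Hom}(\mathcal{L},\overline{\mathcal{L}}) \to \operatorname{Hom}(F\mathcal{L},F\overline{\mathcal{L}})$; your proposal postpones this as a ``shift ambiguity'' without supplying the tool to resolve it.
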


Specializing this result allows us to classify noncommutative conics (that is, noncommutative curves of genus zero over $k$ with underlying $(1,4)$-bimodule, i.e. whose underlying bimodule is one-dimensional as a left module and four-dimensional as a right module) in terms of their underlying bimodules (Corollary \ref{cor.newwitt}).  As we shall prove in Corollary \ref{cor.wittcor}, this is a generalization of a theorem of Witt which states that if $\operatorname{char }k \neq 2$, then two quaternion $k$-algebras are isomorphic over $k$ if and only if their associated conics are isomorphic.

{\it Acknowledgement:}  I thank D. Kussin for his helpful comments.

\section{Noncommutative symmetric algebras over noetherian rings and noncommutative $\mathbb{P}^{1}$-bundles} \label{section.defn}

\subsection{Duality and admissible bimodules}
Let $R$ and $S$ be noetherian $k$-algebras, and let ${\sf Mod }R$ and ${\sf Mod }S$ denote their categories of right modules.  In this section, following \cite{vandenbergh}, we define the noncommutative symmetric algebra of certain $R-S$-bimodules.  Some of the exposition is adapted from \cite[Section 2]{tsen}.

We assume throughout this section that $N$ is an $R-S$-bimodule which is finitely generated projective as both a left $R$-module and as a right $S$-module.  We next recall the notion of left and right dual of $N$.  The {\it right dual of $N$}, denoted $N^{*}$, is the $S-R$-bimodule whose underlying set is
$\operatorname{Hom}_{S}(N_{S},S)$, with action
$$
(a \cdot \psi \cdot b)(n)=a\psi(bn)
$$
for all $\psi \in \operatorname{Hom}_{S}(N_{S},S)$, $a\in S$ and $b \in R$.

The {\it left dual of $N$}, denoted ${}^{*}N$, is the $S-R$-bimodule whose underlying set is
$\operatorname{Hom}_{R}({}_{R}N,R)$, with action
$$
(a \cdot \phi
\cdot b)(n)=\phi(na)b
$$
for all $\phi \in \operatorname{Hom}_{R}({}_{R}N,R)$, $a \in S$ and $b \in R$.  This assignment extends to morphisms between $R-S$-bimodules in the obvious way.

We set
$$
N^{i*}:=
\begin{cases}
N & \text{if $i=0$}, \\
(N^{i-1*})^{*} & \text{ if $i>0$}, \\
{}^{*}(N^{i+1*}) & \text{ if $i<0$}.
\end{cases}
$$
We note that although $N^{*}$ (resp. ${}^{*}N$) is finitely generated projective on the left (resp. finitely generated projective on the right), it is not clear that $N^{*}$ is finitely generated projective on the right (resp. finitely generated projective on the left).  Therefore, we make the following

\begin{definition}
We say $N$ is {\it admissible} if $N^{i*}$ is finitely generated projective on each side for all $i \in \mathbb{Z}$.
\end{definition}
We remark that if $R$ and $S$ are finite dimensional simple rings over $k$, then $N$ is automatically admissible.  Furthermore, if $L$ is an $R-S$-bimodule such that $-\otimes_{R}L :{\sf Mod }R \rightarrow {\sf Mod }S$ is an equivalence, then $L$ is admissible by Morita theory.

Next, let
$$
F_{i} = \begin{cases}
R & \mbox{if $i$ is even, and} \\
S & \mbox{if $i$ is odd.}
\end{cases}
$$
We check that, if $N$ is admissible, then for each $i$, both pairs of functors
\begin{equation} \label{eqn.adjointone}
(-\otimes_{F_{i}} N^{i*},-\otimes_{F_{i+1}} N^{i+1*})
\end{equation}
and
\begin{equation} \label{eqn.adjointtwo}
(-\otimes_{F_{i}} {}^{*}(N^{i+1*}), -\otimes_{F_{i+1}} N^{i+1*})
\end{equation}
between the category of right $F_{i}$-modules and the category of right $F_{i+1}$-modules have canonical adjoint structures.  

To prove that (\ref{eqn.adjointone}) has a canonical adjoint pair structure, we note that since $N^{i*}_{F_{i+1}}$ is finitely generated projective, the functor $\operatorname{Hom}_{F_{i+1}}(N^{i*}_{F_{i+1}},-) :{\sf Mod }F_{i+1} \rightarrow {\sf Mod }F_{i}$ commutes with direct sums and is right exact.  Therefore, by the Eilenberg-Watts theorem, there is an isomorphism of functors
\begin{equation} \label{eqn.psi}
\psi:\operatorname{Hom}_{F_{i+1}}(N^{i*}_{F_{i+1}},-) \rightarrow -\otimes_{F_{i+1}}\operatorname{Hom}_{F_{i+1}}(N^{i*}_{F_{i+1}},S).
\end{equation}
The isomorphism $\psi$, together with the canonical adjoint pair structure on
$$
(-\otimes_{F_{i}} N^{i*}, \operatorname{Hom}_{F_{i+1}}(N^{i*}_{F_{i+1}},-))
$$
endows the pair (\ref{eqn.adjointone}) with an adjoint pair structure.

Next we describe a canonical adjoint pair structure for (\ref{eqn.adjointtwo}).  To this end, we leave it as a straightforward exercise to check that the function
\begin{equation} \label{eqn.newpsi}
N^{i*} \longrightarrow {}^{*}(N^{i+1*}) = \operatorname{Hom}_{F_{i+1}}({}_{F_{i+1}}\operatorname{Hom}_{F_{i+1}}(N^{i*}_{F_{i+1}},F_{i+1}), F_{i+1})
\end{equation}
defined by sending $n$ to evaluation at $n$, is an $F_{i}-F_{i+1}$-bimodule isomorphism.  The adjoint structure on (\ref{eqn.adjointtwo}) is then induced from that of (\ref{eqn.adjointone}) and the isomorphism (\ref{eqn.newpsi}).

We denote the images of the units applied to $F_{i}$ by $Q_{i}$ and $Q_{i}'$, respectively.

The proof of the following fact, which is implicit in \cite{vandenbergh}, is almost identical to \cite[p. 201-202]{nyman}.
\begin{lemma} \label{lemma.dualityandtensor}
Suppose $N$ is admissible, $L$ is an $S-R$-bimodule and $P_{1}$ and $P_{2}$ are $R-S$-bimodules such that
$$
(-\otimes_{S}L, -\otimes_{R}P_{1})
$$
and
$$
(-\otimes_{R}P_{2}, - \otimes_{S}L)
$$
are adjoint pairs between ${\sf Mod }R$ and ${\sf Mod }S$.  Then
$$
(N \otimes_{S}L)^{*} \cong P_{1} \otimes_{S} N^{*}
$$
and
$$
{}^{*}(N \otimes_{S} L) \cong P_{2} \otimes_{S} {}^{*}N.
$$
A similar formula holds for the right and left duals of $L \otimes_{R} N$.
\end{lemma}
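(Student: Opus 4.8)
The plan is to prove the four claimed isomorphisms by exhibiting explicit adjunction isomorphisms and invoking uniqueness of adjoints. First I would set up notation: since $(-\otimes_S L, -\otimes_R P_1)$ is an adjoint pair, we have for all $M \in {\sf Mod}\,S$ a natural isomorphism $\operatorname{Hom}_R(M\otimes_S L, -)\cong \operatorname{Hom}_S(M, -\otimes_R P_1)$, and similarly for $P_2$. The key observation is that tensoring with an admissible bimodule interacts with these adjunctions: composing $-\otimes_S L$ with $-\otimes_R N^{i*}$ (for suitable parity) and taking right adjoints, one gets on one hand $-\otimes_S P_1\otimes_S N^*$ and on the other hand a right adjoint to $-\otimes_R(L\otimes_R N)$... wait, I need to be careful about which side $N$ tensors on. The point is: $(N\otimes_S L)$ is naturally an $R$-$R$-bimodule built from $N$ (an $R$-$S$-bimodule) and $L$ (an $S$-$R$-bimodule), so I should track that $-\otimes_R(N\otimes_S L) = (-\otimes_R N)\otimes_S L$ as functors ${\sf Mod}\,R\to{\sf Mod}\,R$.

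The main steps would be: (1) Recall from the discussion preceding the lemma that $(-\otimes_{F_i}N^{i*}, -\otimes_{F_{i+1}}N^{i+1*})$ is an adjoint pair, so in particular $(-\otimes_S N^*, -\otimes_R N)$ — wait, $N^*$ is an $S$-$R$-bimodule so $-\otimes_S N^*:{\sf Mod}\,S\to{\sf Mod}\,R$ — hmm, let me restate: the relevant instance is that $(-\otimes_R N, -\otimes_S N^*)$ is an adjoint pair $({\sf Mod}\,R\to{\sf Mod}\,S$ and back$)$, and $(-\otimes_S\,{}^*N, -\otimes_R N)$ is an adjoint pair. (2) Now $(-\otimes_R(N\otimes_S L))$ as an endofunctor of ${\sf Mod}\,R$ equals $(-\otimes_R N)\circ(-\otimes_S L)$, which has right adjoint $(-\otimes_R P_1)\circ(-\otimes_S N^*) = -\otimes_S(P_1\otimes_S N^*)$... no wait. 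Let me recompute: right adjoint of a composite $G\circ F$ is (right adjoint of $F$)$\circ$(right adjoint of $G$). Here $F = -\otimes_S L:{\sf Mod}\,R\to{\sf Mod}\,S$ has right adjoint $-\otimes_R P_1$, and $G = -\otimes_R N:{\sf Mod}\,S\to{\sf Mod}\,R$... that doesn't typecheck either since $-\otimes_R N$ goes ${\sf Mod}\,R\to{\sf Mod}\,S$. The correct bookkeeping: $N\otimes_S L$ is an $R$-$R$-bimodule, and I want its right dual, which involves the adjoint of $-\otimes_R(N\otimes_S L):{\sf Mod}\,R\to{\sf Mod}\,R$. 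But the statement $(N\otimes_S L)^* \cong P_1\otimes_S N^*$ has $P_1\otimes_S N^*$ as an $S$-$R$-bimodule paired against the $R$-$R$-bimodule $N\otimes_S L$ — so actually the duals here must be the "mixed" duals appropriate to the bundle setup; I would follow the conventions of the paragraph defining $Q_i, Q_i'$ and the footnoted reference $[\text{nyman, p. 201--202}]$ rather than re-deriving them, and verify that the composite adjunctions match up on the nose.

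The cleanest route, and the one I would take, is: (3) Write down the unit/counit of the composite adjunction built from the given pairs and the canonical pairs $(-\otimes N^{i*}, -\otimes N^{i+1*})$, show it is an adjunction between $-\otimes(N\otimes_S L)$ and $-\otimes(P_1\otimes_S N^*)$ (with the appropriate source/target categories and left/right module structures), and then invoke that the right adjoint of $-\otimes_? X$ is, up to canonical isomorphism of bimodules, $-\otimes_? X^*$ — this is exactly the uniqueness-of-adjoints argument combined with the Eilenberg--Watts identification $(\ref{eqn.psi})$ already used above. (4) The case of ${}^*(N\otimes_S L)\cong P_2\otimes_S{}^*N$ is dual, using $(-\otimes_R P_2, -\otimes_S L)$ and the left-dual adjunction $(\ref{eqn.adjointtwo})$ together with $(\ref{eqn.newpsi})$. (5) The final sentence, about $L\otimes_R N$, follows by the same computation with the roles of left and right interchanged throughout. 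The main obstacle, and the only real content, is bookkeeping the four bimodule structures and the parities $F_i$ consistently so that the composite adjunction genuinely has the asserted functor as its adjoint — once the adjunction is correctly assembled, the isomorphism of bimodules is formal. Since this bookkeeping is essentially identical to $[\text{nyman, p.\ 201--202}]$, I would carry it out by citing that argument and indicating the (trivial) modifications needed to replace the finite field extensions there by the noetherian $k$-algebras $R$ and $S$ here, the only check being that admissibility of $N$ is exactly what guarantees all the intermediate functors $-\otimes N^{i*}$ have the required exactness and finite-generation-projectivity properties for Eilenberg--Watts to apply.
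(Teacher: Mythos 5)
Your final plan (steps (3)--(5)) is correct, and it is the same route the paper gestures at by citing \cite[p.~201--202]{nyman}: compose adjunctions, apply uniqueness of adjoints, and use the Eilenberg--Watts identification to turn a natural isomorphism of functors into an isomorphism of bimodules. Concretely, $-\otimes_R(N\otimes_S L)$ is the composite $(-\otimes_S L)\circ(-\otimes_R N)\colon{\sf Mod}\,R\to{\sf Mod}\,R$, whose right adjoint is $(-\otimes_S N^*)\circ(-\otimes_R P_1) = -\otimes_R(P_1\otimes_S N^*)$; Eilenberg--Watts then yields $(N\otimes_S L)^*\cong P_1\otimes_S N^*$, and analogously for the left dual with $P_2$ and ${}^*N$. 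You should also note (as you do implicitly) that admissibility of $N$ together with the hypothesis on $L$ forces $(N\otimes_S L)_R$ to be finitely generated projective, which is what legitimates writing the right adjoint of $-\otimes_R(N\otimes_S L)$ as $-\otimes_R(N\otimes_S L)^*$ in the first place.

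The one thing to clean up is the bimodule bookkeeping you worried about mid-proof. There is no typing mismatch and no need for any ``mixed duals'': since $P_1$ is $R$-$S$ and $N^*$ is $S$-$R$, the tensor $P_1\otimes_S N^*$ is an $R$-$R$-bimodule, exactly matching $(N\otimes_S L)^*$ (recall $N\otimes_S L$ is $R$-$R$, so its right dual is too). Likewise $P_2\otimes_S{}^*N$ is $R$-$R$, matching ${}^*(N\otimes_S L)$. You misread $P_1\otimes_S N^*$ as an $S$-$R$-bimodule at one point, which is what created the apparent discrepancy; once that is corrected the argument is entirely routine. For the final sentence of the lemma, swapping roles gives $(L\otimes_R N)^*\cong N^*\otimes_R P_1$ and ${}^*(L\otimes_R N)\cong{}^*N\otimes_R P_2$, both $S$-$S$-bimodules, as expected.
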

Finally, suppose $k'$ is an extension field of $k$.  We let $N_{k'}$ denote the $k'$-central $R\otimes_{k}k' - S \otimes_{k}k'$-bimodule whose underlying set is $N \otimes_{k}k'$ and whose action is inherited from the $R-S$-bimodule action on $N$.  We note that since $R$ and $S$ are noetherian, $N$ is finitely presented on each side, so base change on bimodules is compatible with taking duals (as in \cite[Lemma 3.1.9]{vandenbergh}).  Therefore, if $N$ is admissible, so is $N_{k'}$.

\subsection{Free admissibility}
In the proof of Theorem \ref{thm.newtsen} we will need to adapt the proof of \cite[Theorem 3.10]{tsen}.  This, in turn, will require us, by \cite[Lemma 3.3]{tsen}, to prove that the images of the unit maps of the adjoint pairs (\ref{eqn.adjointone}) and (\ref{eqn.adjointtwo}) take on a particular form in case $N^{i*}$ is {\it free} of finite rank on either side for all $i$.  Since $R$ and $S$ are noetherian, they are IBN rings \cite[p. 216]{cohn}, so that the notion of rank of a left or right free module is well defined.

\begin{definition}
We say $N$ is {\it free admissible} if $N^{i*}$ is free of finite rank on each side for all $i \in \mathbb{Z}$.
\end{definition}

We assume throughout this section that $N$ is a free admissible $R-S$-bimodule.  We note that this hypothesis holds if $R$ and $S$ are simple rings finite dimensional over $k$ and $N$ is finitely generated on either side.

We suppose $N^{i*}_{F_{i+1}}$ is free of rank $n$, $\{\phi_{1},\ldots,\phi_{n}\}$ is a right basis for $N^{i*}$ and $\{f_{1},\ldots, f_{n}\}$ is the corresponding dual left basis for $N^{i+1*}$.  We suppose $\{\phi'_{1},\ldots,\phi'_{n}\}$ is the right basis for ${}^{*}(N^{i+1*})$ dual to $\{f_{1},\ldots, f_{n}\}$.

We define $\eta_{i}: F_{i} \longrightarrow N^{i*} \otimes_{F_{i+1}}  N^{i+1*}$ by
\begin{equation} \label{eqn.unit1}
\eta_{i}(a) = a \sum_{j} \phi_{j} \otimes f_{j}
\end{equation}
and we define $\eta_{i}':F_{i} \longrightarrow {}^{*}(N^{i+1*}) \otimes_{F_{i+1}} N^{i+1*}$ by
\begin{equation} \label{eqn.unit2}
\eta_{i}'(a) = a \sum_{j} \phi'_{j} \otimes f_{j}.
\end{equation}

\begin{prop} \label{prop.newreln}
If $i \in \mathbb{Z}$, then the images of the units of the adjoint pairs (\ref{eqn.adjointone}) and (\ref{eqn.adjointtwo}) applied to $F_{i}$ equal the images of $\eta_{i}$ and $\eta_{i}'$ respectively.
\end{prop}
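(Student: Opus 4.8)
The statement compares two descriptions of the unit map of the adjoint pair $(-\otimes_{F_i}N^{i*}, -\otimes_{F_{i+1}}N^{i+1*})$: the abstract one, built in the excerpt out of the Eilenberg--Watts isomorphism $\psi$ of (\ref{eqn.psi}) composed with the tensor-hom adjunction, and the explicit one given by the formula (\ref{eqn.unit1}) (and likewise for the primed pair via (\ref{eqn.unit2}) and (\ref{eqn.newpsi})). So the whole proposition is really a matter of unwinding the construction of $\psi$ and the standard tensor-hom unit in the free case and checking the two agree. I would prove it by an explicit chase.

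\medskip
\noindent\textbf{Step 1: Identify the abstract unit concretely.} Recall the canonical adjunction $(-\otimes_{F_i}N^{i*},\operatorname{Hom}_{F_{i+1}}(N^{i*}_{F_{i+1}},-))$. Its unit at $F_i$ is the map $F_i\to\operatorname{Hom}_{F_{i+1}}(N^{i*}_{F_{i+1}},N^{i*})$ sending $a$ to the homomorphism $\phi\mapsto a\phi$ (``multiplication by $a$''; when $a=1$ this is just the right-multiplication map, i.e. the tautological element $\operatorname{id}_{N^{i*}}$ of $\operatorname{Hom}_{F_{i+1}}(N^{i*},N^{i*})$ under the identification $N^{i*}=F_i\otimes_{F_i}N^{i*}$). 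The abstract unit of (\ref{eqn.adjointone}) is then this map followed by the component of the Eilenberg--Watts isomorphism $\psi$ of (\ref{eqn.psi}) at the object $N^{i*}$, i.e.
$$
\psi_{N^{i*}}:\operatorname{Hom}_{F_{i+1}}(N^{i*}_{F_{i+1}},N^{i*})\;\longrightarrow\;N^{i*}\otimes_{F_{i+1}}\operatorname{Hom}_{F_{i+1}}(N^{i*}_{F_{i+1}},F_{i+1})=N^{i*}\otimes_{F_{i+1}}N^{i+1*},
$$
where the last equality uses that $N^{i+1*}$ is, by definition, the right dual $\operatorname{Hom}_{F_{i+1}}(N^{i*}_{F_{i+1}},F_{i+1})$.

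\medskip
\noindent\textbf{Step 2: Evaluate $\psi$ on the free basis.} This is the crux, and the only place the free-admissibility hypothesis is used. The Eilenberg--Watts natural transformation $\operatorname{Hom}_{F_{i+1}}(N^{i*},-)\to -\otimes_{F_{i+1}}\operatorname{Hom}_{F_{i+1}}(N^{i*},F_{i+1})$ is, at a module $T$, the map $g\mapsto\sum_j g(\phi_j)\otimes f_j$ where $\{\phi_j\}$ is the chosen right basis of $N^{i*}_{F_{i+1}}$ and $\{f_j\}\subset N^{i+1*}=\operatorname{Hom}_{F_{i+1}}(N^{i*},F_{i+1})$ is the dual basis (so $\phi_\ell f_j:=f_j(\phi_\ell)=\delta_{j\ell}$) --- this is precisely the ``corresponding dual left basis'' named in the excerpt. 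One verifies this formula is the inverse of the obvious evaluation map $\theta_T:T\otimes_{F_{i+1}}\operatorname{Hom}(N^{i*},F_{i+1})\to\operatorname{Hom}(N^{i*},T)$, $t\otimes f\mapsto(\phi\mapsto t\,f(\phi))$, using $\sum_j\phi_j f_j(\phi)=\phi$ for all $\phi$ (freeness) and $\sum_j(g(\phi_j))\otimes f_j$ reconstructing $g$; hence it is the unique natural isomorphism, so it agrees with the $\psi$ of (\ref{eqn.psi}).

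\medskip
\noindent\textbf{Step 3: Compose and compare.} Applying Step 2 with $T=N^{i*}$ and $g$ = multiplication-by-$a$ (so $g(\phi_j)=a\phi_j$), the abstract unit of (\ref{eqn.adjointone}) at $a\in F_i$ is $\sum_j a\phi_j\otimes f_j=a\sum_j\phi_j\otimes f_j$, which is exactly $\eta_i(a)$ of (\ref{eqn.unit1}). For the pair (\ref{eqn.adjointtwo}), its adjunction was defined in the excerpt by transporting that of (\ref{eqn.adjointone}) along the bimodule isomorphism (\ref{eqn.newpsi}) $N^{i*}\xrightarrow{\sim}{}^{*}(N^{i+1*})$, $n\mapsto\mathrm{ev}_n$; under this isomorphism $\phi_j\mapsto\phi'_j$ precisely because $\{\phi'_j\}$ was chosen as the right basis of ${}^{*}(N^{i+1*})$ dual to $\{f_j\}$ and $\mathrm{ev}_{\phi_j}(f_\ell)=f_\ell(\phi_j)=\delta_{j\ell}$. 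Tensoring (\ref{eqn.newpsi})$\otimes\operatorname{id}_{N^{i+1*}}$ therefore carries $\eta_i(a)=a\sum_j\phi_j\otimes f_j$ to $a\sum_j\phi'_j\otimes f_j=\eta_i'(a)$, giving the claim for (\ref{eqn.adjointtwo}) as well.

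\medskip
\noindent\textbf{Main obstacle.} The substance is entirely in Step 2: pinning down that the Eilenberg--Watts isomorphism $\psi$, whose existence was invoked abstractly, is given on the free basis by $g\mapsto\sum_j g(\phi_j)\otimes f_j$. One must either recall the explicit construction of the Eilenberg--Watts map or, cleaner, argue that $\theta_T$ above is a natural isomorphism (freeness of $N^{i*}_{F_{i+1}}$ makes it so) and appeal to uniqueness of the natural isomorphism implementing Eilenberg--Watts; everything after that is bookkeeping with dual bases. The primed case adds nothing new beyond correctly matching $\phi_j\leftrightarrow\phi'_j$ under the evaluation isomorphism (\ref{eqn.newpsi}), which is forced by the choice of $\{\phi'_j\}$ as the dual basis to $\{f_j\}$.
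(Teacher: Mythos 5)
Your argument follows the same route as the paper's proof: unwind the canonical tensor--hom adjunction, make the Eilenberg--Watts isomorphism explicit as $\psi_P(\delta)=\sum_j\delta(\phi_j)\otimes f_j$, compose and evaluate at $1\in F_i$ to recover $\eta_i(1)=\sum_j\phi_j\otimes f_j$, and then transport along the isomorphism (\ref{eqn.newpsi}) to handle the primed pair. The only imprecision is the claim in Step 2 that the natural isomorphism implementing (\ref{eqn.psi}) is ``unique'' --- it is not, since one may postcompose with any bimodule automorphism of $N^{i+1*}$ --- but this is harmless here because the paper's $\psi$ is fixed to be exactly the map you write down, so the conclusion and the computation are the same.
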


\begin{proof}
We prove the first assertion.  The proof of the second follows easily from the first by the definition of (\ref{eqn.newpsi}).  Let
$$
(-\otimes_{F_{i}} N^{i*}, \operatorname{Hom}_{F_{i+1}}(N^{i*}_{F_{i+1}},-), \eta, \epsilon)
$$
denote the canonical adjunction between ${\sf Mod }F_{i}$ and ${\sf Mod }F_{i+1}$.  If $P$ is a right $F_{i+1}$-module, then the map $\psi_{P}$ from (\ref{eqn.psi}),
$$
\psi_{P}:\operatorname{Hom}_{F_{i+1}}(N^{i*}_{F_{i+1}},P) \rightarrow P \otimes_{F_{i+1}}N^{i+1*}
$$
is defined by $\psi_{P}(\delta) := \sum_{j}\delta(\phi_{j}) \otimes f_{j}$.  Thus, the adjunction (\ref{eqn.adjointone}) is
$$
(-\otimes_{F_{i}} N^{i*}, -\otimes_{F_{i+1}}N^{i+1*}, \tilde{\eta}, \tilde{\epsilon})
$$
with $\tilde{\eta} = (\psi*(-\otimes N^{i*})) \circ \eta$ and $\tilde{\epsilon} = \epsilon \circ ((- \otimes N^{i*}) * \psi^{-1})$ where $*$ denotes the horizontal composition of natural transformations.  It remains to explicitly compute $\tilde{\eta}_{F_{i}}(1)$ and show it may be identified with $\eta_{i}(1)$.  We leave this elementary computation to the reader.
\end{proof}

\subsection{Noncommutative symmetric algebras and $\mathbb{P}^{1}$-bundles}
The purpose of this section is to define the notions of noncommutative symmetric algebra and noncommutative $\mathbb{P}^{1}$-bundle of an admissible bimodule, following M. Van den Bergh.  We first introduce a convention that will be in effect throughout the remainder of this paper:  all unadorned tensor products will be bimodule tensor products over the appropriate base ring.

We next recall the definition of $\mathbb{Z}$-algebra from \cite[Section 2]{quadrics}: a $\mathbb{Z}$-{\it algebra} is a ring $A$ with decomposition $A=\oplus_{i,j \in \mathbb{Z}}A_{ij}$ into $k$-vector spaces, such that multiplication has the property $A_{ij}A_{jk} \subset A_{ik}$ while $A_{ij}A_{kl}=0$ if $j \neq k$.  Furthermore, for $i \in \mathbb{Z}$, there is a local unit $e_{i} \in A_{ii}$, such that if $a \in A_{ij}$, then $e_{i}a=a=ae_{j}$.  Just as with $\mathbb{Z}$-graded algebras, we define ${\sf proj }A$ to be the quotient category ${\sf gr }A/{\sf tors }A$, where ${\sf gr }A$ denotes the category of finitely generated graded right $A$-modules and ${\sf tors }A$ denotes the full subcategory of ${\sf gr }A$ consisting of right bounded modules.

We will need the following terminology regarding $\mathbb{Z}$-algebras, from \cite{quadrics}: Let $i \in \mathbb{Z}$.  If $A$ is a $\mathbb{Z}$-algebra, we let $A(i)$ (the shift of $A$ by $i$) denote the $\mathbb{Z}$-algebra with $A(i)_{jk}=A_{i+j,i+k}$, and with multiplication induced from that of $A$.  A $\mathbb{Z}$-algebra is called $i$-periodic if $A \cong A(i)$.  If $B$ is a $\mathbb{Z}$-graded ring, we let $\tilde{B}$ denote the $\mathbb{Z}$-algebra with $\tilde{B}_{ij}=B_{j-i}$.

We have the following
\begin{lemma} \cite[Lemma 2.4]{quadrics} \label{lemma.veronese}
If $A$ is a 1-periodic $\mathbb{Z}$-algebra, then $A \cong \tilde{B}$ for a $\mathbb{Z}$-graded ring $B$.
\end{lemma}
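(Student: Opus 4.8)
The plan is to reconstruct a $\mathbb{Z}$-graded ring $B$ directly from the homogeneous pieces of $A$ using the $1$-periodicity isomorphism, and then verify that $\widetilde{B} \cong A$. First I would fix an isomorphism of $\mathbb{Z}$-algebras $\theta \colon A \to A(1)$; concretely, $\theta$ gives compatible $k$-linear isomorphisms $\theta_{jk} \colon A_{jk} \to A_{j+1,k+1}$ respecting the multiplication and the local units. Iterating $\theta$ produces isomorphisms $A_{jk} \to A_{j+n,k+n}$ for all $n$, so every homogeneous component $A_{jk}$ is canonically identified with $A_{0,k-j}$. This suggests setting $B_{n} := A_{0,n}$ for $n \in \mathbb{Z}$, with the underlying abelian group of $B$ being $\bigoplus_{n \in \mathbb{Z}} A_{0,n}$.

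Next I would define the multiplication on $B$. Given $b \in B_{m} = A_{0,m}$ and $b' \in B_{n} = A_{0,n}$, the product $b b' \in B_{m+n} = A_{0,m+n}$ should be: transport $b'$ via $\theta^{m}$ to an element of $A_{m,m+n}$, multiply in $A$ to get $b \cdot \theta^{m}(b') \in A_{0,m+n}$, and declare that to be $bb'$. I would then check that this multiplication is associative — this reduces to the compatibility of $\theta$ with the multiplication of $A$ together with the cocycle-type identity $\theta^{m+n} = \theta^{m} \circ \theta^{n}$ on the relevant components — and that $e_{0} \in A_{00} = B_{0}$ is a two-sided identity, using the local-unit axiom $e_{i} a = a = a e_{j}$ and the fact that $\theta$ preserves local units (so $\theta^{m}(e_{0}) = e_{m}$). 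This makes $B$ a $\mathbb{Z}$-graded ring.

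Finally I would exhibit the isomorphism $\widetilde{B} \cong A$. By definition $\widetilde{B}_{jk} = B_{k-j} = A_{0,k-j}$, and the natural candidate for the map $\widetilde{B}_{jk} \to A_{jk}$ is $\theta^{j}$ (the identification of $A_{0,k-j}$ with $A_{j,k}$). One checks this is a bijection on each component, hence a graded $k$-linear isomorphism of the underlying vector spaces, and that it is multiplicative: a product in $\widetilde{B}$ of an element of degree $(j,k)$ with one of degree $(k,\ell)$ is computed in $B$ exactly so as to match, after applying $\theta^{j}$ and $\theta^{k}$, the product $A_{jk} A_{k\ell} \subseteq A_{j\ell}$ in $A$; it also sends $e_{i}^{\widetilde{B}}$ to $e_{i}^{A}$. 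The orthogonality $A_{ij}A_{kl} = 0$ for $j \neq k$ is automatic on both sides from the grading, so nothing extra is needed there.

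The main obstacle is purely bookkeeping: making the definition of the multiplication on $B$ independent of choices and checking associativity without drowning in indices. The conceptual content is just that a $1$-periodic $\mathbb{Z}$-algebra is the "smash product" of a graded ring with $\mathbb{Z}$ acting by the shift, and the periodicity isomorphism is what collapses this back to an honest graded ring; once $B_{n} := A_{0,n}$ and multiplication-via-$\theta$ are written down, every verification is a direct consequence of $\theta$ being a $\mathbb{Z}$-algebra isomorphism. I expect the argument to be a few lines of careful but routine calculation, which (in keeping with the style of the cited source \cite{quadrics}) can reasonably be left partly to the reader.
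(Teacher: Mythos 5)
Your construction is correct, and since the paper only cites this lemma from Van den Bergh's \emph{Noncommutative quadrics} rather than proving it, there is no internal proof to compare against. Your approach---setting $B_n := A_{0,n}$, twisting the multiplication by iterates of the periodicity isomorphism $\theta$, and verifying associativity, unitality (via $\theta^m(e_0)=e_m$), and the isomorphism $\tilde{B}\cong A$ given componentwise by $\theta^j$---is exactly the standard argument from the cited source.
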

Let $A^{(2)}$ denote the 2-Veronese of $A$, i.e., the $\mathbb{Z}$-subalgebra of $A$ consisting of all even components.  We will need the fact that if $A$ is noetherian (i.e. the category of graded right $A$-modules is locally noetherian), then inclusion $A^{(2)} \rightarrow A$ induces an equivalence ${\sf proj }A \rightarrow {\sf proj }A^{(2)}$ \cite[Lemma 2.5]{quadrics}.

Now we are finally ready to recall (from \cite{vandenbergh}) the definition of the noncommutative symmetric algebra of an admissible bimodule $N$.  The {\it noncommutative symmetric algebra of $N$}, denoted $\mathbb{S}^{nc}(N)$, is the $\mathbb{Z}$-algebra $\underset{i,j \in
\mathbb{Z}}{\oplus}A_{ij}$ with components defined as follows:
\begin{itemize}
\item{} $A_{ij}=0$ if $i>j$.

\item{} $A_{ii}=R$ for $i$ even,

\item{} $A_{ii}=S$ for $i$ odd, and

\item{} $A_{ii+1}=N^{i*}$.
\end{itemize}
In order to define $A_{ij}$ for $j>i+1$, we introduce some notation: we define $T_{ii+1} := A_{ii+1}$, and, for $j>i+1$, we define
$$
T_{ij} := A_{ii+1} \otimes A_{i+1i+2} \otimes \cdots \otimes A_{j-1j}.
$$
We let $R_{i i+1}:= 0$, $R_{i i+2}:=Q_{i}$,
$$
R_{i i+3}:=Q_{i} \otimes N^{i+2*}+N^{i*} \otimes Q_{i+1},
$$
and, for $j>i+3$, we let
$$
R_{ij} := Q_{i} \otimes T_{i+2j}+T_{ii+1}\otimes Q_{i+1} \otimes T_{i+3j}+\cdots + T_{ij-2} \otimes Q_{j-2}.
$$

\begin{itemize}
\item{} For $j>i+1$, we define $A_{ij}$ as the quotient $T_{ij}/R_{ij}$.
\end{itemize}
Multiplication in $\mathbb{S}^{nc}(N)$ is defined as follows:
\begin{itemize}

\item{} if $x \in A_{ij}$, $y \in A_{lk}$ and $j \neq l$, then $xy=0$,

\item{} if $x \in A_{ij}$ and $y \in A_{jk}$, with either $i=j$ or $j=k$, then $xy$ is induced by the usual scalar action,

\item{}  otherwise, if $i<j<k$, we have
\begin{eqnarray*}
A_{ij} \otimes A_{jk} & = & \frac{T_{ij}}{R_{ij}} \otimes \frac{T_{jk}}{R_{jk}}\\
& \cong & \frac{T_{ik}}{R_{ij}\otimes T_{jk}+T_{ij} \otimes
R_{jk}}.
\end{eqnarray*}
Since $R_{ij} \otimes T_{jk}+T_{ij} \otimes R_{jk}$ is a submodule of $R_{ik}$,
there is thus an epi $\mu_{ijk}:A_{ij} \otimes A_{jk} \longrightarrow
A_{ik}$.
\end{itemize}
The following is an easy consequence of \cite[Lemma 6.6]{duality}:
\begin{lemma} \label{lemma.functorality}
If $\psi:M \rightarrow N$ is an isomorphism of admissible $R-S$-bimodules, then $\psi$ induces an isomorphism $\mathbb{S}^{nc}(M) \rightarrow \mathbb{S}^{nc}(N)$.
\end{lemma}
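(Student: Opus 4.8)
The plan is to realize the induced map componentwise on the $\mathbb{Z}$-algebra $\mathbb{S}^{nc}(M)=\oplus_{i,j}A_{ij}^{M}$ and then check compatibility with the multiplication. First I would promote $\psi$ to a compatible family of bimodule isomorphisms $\psi^{i*}:M^{i*}\to N^{i*}$ for all $i\in\mathbb{Z}$, by iterating the (contravariant) duality functors: put $\psi^{0*}=\psi$, and for $i>0$ let $\psi^{(i+1)*}$ be the inverse of the right-dual morphism $(\psi^{i*})^{*}:(N^{i*})^{*}\to(M^{i*})^{*}$, which is an isomorphism because $\psi^{i*}$ is; proceed symmetrically with left duals for $i<0$. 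Admissibility of $M$ and $N$ is what guarantees that all the $M^{i*}$, $N^{i*}$ are finitely generated projective on each side, so the duals behave and each $\psi^{i*}$ is an isomorphism of $F_{i}$-$F_{i+1}$-bimodules. This already supplies the isomorphisms on the diagonal (the identity on $R$ or $S$) and on the first off-diagonal, $A_{ii+1}^{M}=M^{i*}\xrightarrow{\sim}N^{i*}=A_{ii+1}^{N}$.

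The crucial point, and the place where \cite[Lemma 6.6]{duality} enters, is that the duality functors are compatible with the canonical adjunctions of (\ref{eqn.adjointone}) and (\ref{eqn.adjointtwo}). Concretely, I would invoke that lemma to conclude that $\psi^{i*}\otimes\psi^{(i+1)*}:M^{i*}\otimes M^{(i+1)*}\to N^{i*}\otimes N^{(i+1)*}$ carries the image $Q_{i}^{M}$ of the unit of (\ref{eqn.adjointone}) for $M$ onto the image $Q_{i}^{N}$ for $N$, and likewise $(Q_{i}')^{M}\mapsto(Q_{i}')^{N}$ for (\ref{eqn.adjointtwo}). Granting this, the tensor maps $\psi^{i*}\otimes\cdots\otimes\psi^{(j-1)*}:T_{ij}^{M}\to T_{ij}^{N}$ are isomorphisms that send each summand of $R_{ij}^{M}$ (each of which is a tensor product of various $T$'s with a single $Q$ or $Q'$ inserted) onto the matching summand of $R_{ij}^{N}$; hence they restrict to an isomorphism $R_{ij}^{M}\to R_{ij}^{N}$ and descend to an isomorphism $A_{ij}^{M}=T_{ij}^{M}/R_{ij}^{M}\xrightarrow{\sim}T_{ij}^{N}/R_{ij}^{N}=A_{ij}^{N}$ for all $j>i+1$.

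Finally I would verify that the collection $\{A_{ij}^{M}\to A_{ij}^{N}\}$ is a homomorphism of $\mathbb{Z}$-algebras, i.e. that it intertwines the maps $\mu_{ijk}$ and is unital; this is automatic because every $\mu_{ijk}$ is induced from the canonical identification $T_{ij}\otimes T_{jk}\cong T_{ik}$ together with the quotient maps, all of which are natural in the bimodule and its duals. Running the same construction on $\psi^{-1}$ produces a two-sided inverse, so the assembled map $\mathbb{S}^{nc}(M)\to\mathbb{S}^{nc}(N)$ is an isomorphism. The only step that is not pure bookkeeping is the compatibility of dualization with the unit images $Q_{i}$ and $Q_{i}'$, which is exactly the content imported from \cite[Lemma 6.6]{duality}; I expect that to be the main (and only) obstacle, with the rest being a routine diagram chase — consistent with the assertion that the lemma is an \emph{easy} consequence of that reference.
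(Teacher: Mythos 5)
Your proposal is correct and matches the approach the paper intends: the paper simply declares the result an easy consequence of the cited duality lemma without writing out the details, and you have unpacked exactly what that entails — iterating the (contravariant) dual to get a compatible family $\psi^{i*}$, using the cited lemma to see that $\psi^{i*}\otimes\psi^{(i+1)*}$ carries $Q_i^M$ to $Q_i^N$, hence that the tensor maps restrict to isomorphisms $R_{ij}^M\to R_{ij}^N$ and descend to the quotients, and finally checking multiplicativity by naturality. One small remark: the relations $R_{ij}$ in the definition of $\mathbb{S}^{nc}$ are built only from the $Q_i$ (the unit images for the adjunction (\ref{eqn.adjointone})), not the $Q_i'$, so the parenthetical ``a single $Q$ or $Q'$ inserted'' slightly overstates what must be tracked; only preservation of the $Q_i$ is needed for the descent argument.
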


We define $\mathbb{P}^{nc}(N)$, to be the quotient of the category of graded right $\mathbb{S}^{nc}(N)$-modules modulo the full subcategory of direct limits of right bounded modules \cite[Definition 1.1]{vandenbergh}.  For the motivation behind this definition, we refer the reader to \cite[Section 2]{stafford}.

Furthermore, if $k'$ is a field extension of $k$, we define $\mathbb{P}^{nc}(N)_{k'} := \mathbb{P}^{nc}(N_{k'})$.

\subsection{Morita equivalence}
We end this section by studying the behavior of $\mathbb{P}^{nc}(N)$ under Morita equivalence of bases.  We introduce notation that will be used throughout the rest of the section.  Let $S'$ be a $k$-algebra and let $F:{\sf Mod }S \rightarrow {\sf Mod }S'$ be an equivalence.  By the Eilenberg-Watts theorem, we may write
$$
F=-\otimes_{S}L
$$
where $L$ is an $S-S'$-bimodule such that there exists an $S'-S$-bimodule $L'$ such that $L \otimes_{S'} L'$ and $L' \otimes_{S} L$ are trivial.  Similarly, let $R'$ be a $k$-algebra and let $G:R-{\sf Mod } \rightarrow R'-{\sf Mod}$ be an equivalence given by
$$
G=P \otimes_{R} -
$$
where $P$ is an $R'-R$-bimodule such that there exists an $R-R'$-bimodule $P'$ such that $P' \otimes_{R'} P$ and $P \otimes_{R} P'$ are trivial.

\begin{prop} \label{prop.morita}
Suppose $R, R', S$ and $S'$ are noetherian $k$-algebras.  If $N$ is admissible, then so are $N \otimes_{S} L$ and $P \otimes_{R} N$, and there are equivalences
$$
\mathbb{P}^{nc}(N) \rightarrow \mathbb{P}^{nc}(N \otimes_{S} L)
$$
and
$$
\mathbb{P}^{nc}(N) \rightarrow \mathbb{P}^{nc}(P \otimes_{R} N).
$$
In particular, if $R$ and $S$ are division rings finite dimensional over $k$, if there are $k$-algebra isomorphisms $\phi_{1}:R \rightarrow R'$ and $\phi_{2}:S \rightarrow S'$, and if $N'$ is the $R'-S'$-bimodule whose underlying set is $N$ and whose action is induced by $\phi_{1}$ and $\phi_{2}$, then there is an equivalence $\mathbb{P}^{nc}(N) \rightarrow \mathbb{P}^{nc}(N')$.
\end{prop}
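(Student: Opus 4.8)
The plan is to handle $N\otimes_S L$ and $P\otimes_R N$ in parallel: first check admissibility by computing their iterated duals, then promote the Morita equivalences of the bases to equivalences of the categories of graded modules over the noncommutative symmetric algebras which descend to $\mathbb{P}^{nc}$, and finally obtain the last assertion by composing the two resulting equivalences and invoking Lemma \ref{lemma.functorality}.

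\emph{Admissibility.} Since $-\otimes_S L$ and $-\otimes_{S'}L'$ are mutually inverse equivalences, $L$ and $L'$ are finitely generated projective on each side and there are bimodule isomorphisms $L\otimes_{S'}L'\cong S$, $L'\otimes_S L\cong S'$; likewise for $P,P'$. Using that $N^{i*}$ is finitely generated projective on each side for every $i$, an argument parallel to the proof of Lemma \ref{lemma.dualityandtensor} (iterated tensor--hom adjunction, together with the identification of the dual of a finitely generated projective module with a tensor product) shows that $(N\otimes_S L)^{i*}\cong N^{i*}\otimes_S L$ when $i$ is even and $(N\otimes_S L)^{i*}\cong L'\otimes_S N^{i*}$ when $i$ is odd, with $L$ and $L'$ interchanged for $i<0$; analogously $(P\otimes_R N)^{i*}$ is $P\otimes_R N^{i*}$ or $N^{i*}\otimes_R P'$ according to the parity of $i$. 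Each of these is finitely generated projective on both sides, so $N\otimes_S L$ and $P\otimes_R N$ are admissible.

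\emph{Descent to $\mathbb{P}^{nc}$.} Write $A=\mathbb{S}^{nc}(N)$ and $A'=\mathbb{S}^{nc}(N\otimes_S L)$, with tensor and relation modules $T_{ij},R_{ij}$ and $T'_{ij},R'_{ij}$. Feeding the identifications above into the definition of $A'$ and cancelling the interior factors $L\otimes_{S'}L'\cong S$, one finds that $T'_{ij}$ is canonically $T_{ij}$ with $L'$ tensored on the left when $i$ is odd and with $L$ tensored on the right when $j$ is odd. The relation submodule $Q_i\subseteq N^{i*}\otimes_{F_{i+1}}N^{i+1*}$ is, by construction, the image of the canonical map $F_i\to\operatorname{End}_{F_{i+1}}(N^{i*})\cong N^{i*}\otimes_{F_{i+1}}N^{i+1*}$ given by left multiplication (this is the content of Proposition \ref{prop.newreln}, whose formula for $\eta_i$ is manifestly independent of the chosen dual basis), and likewise for the relation coming from (\ref{eqn.adjointtwo}); by naturality of this canonical map under the identifications above one gets the corresponding description of $R'_{ij}$ in terms of $R_{ij}$, and hence of $A'_{ij}$ in terms of $A_{ij}$, compatibly with multiplication. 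Consequently the assignment $X\mapsto X'$ with $X'_i=X_i$ for $i$ even, $X'_i=X_i\otimes_S L$ for $i$ odd, and structure maps transported through these isomorphisms, is a functor from graded right $A$-modules to graded right $A'$-modules; the symmetric construction using $L'$ is a quasi-inverse, so it is an equivalence. Because $-\otimes_S L$ and $-\otimes_{S'}L'$ are exact, preserve direct limits, and leave grading degrees unchanged, this equivalence and its inverse carry direct limits of right bounded modules to direct limits of right bounded modules, so it descends to an equivalence $\mathbb{P}^{nc}(N)\to\mathbb{P}^{nc}(N\otimes_S L)$. The same argument with $P,P'$ in place of $L,L'$ produces $\mathbb{P}^{nc}(N)\to\mathbb{P}^{nc}(P\otimes_R N)$. (One may instead package this step as a Morita-equivariant refinement of the functoriality statement \cite[Lemma 6.6]{duality} underlying Lemma \ref{lemma.functorality}.)

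\emph{The last assertion, and the main obstacle.} Division rings finite dimensional over $k$ are simple and finite dimensional over $k$, so $N$ is automatically admissible. Take $P$ to be $R'$ regarded as an $R'$--$R$-bimodule via $\phi_1$ (with $P'=R'$ via $\phi_1^{-1}$) and $L$ to be $S'$ regarded as an $S$--$S'$-bimodule via $\phi_2$ (with $L'=S'$ via $\phi_2^{-1}$); these realize the Morita equivalences induced by $\phi_1$ and $\phi_2$. A direct computation shows that $n\mapsto 1\otimes n\otimes 1$ is a bimodule isomorphism $N'\to P\otimes_R N\otimes_S L$, since both sides carry precisely the $R'$- and $S'$-actions obtained from the $R$- and $S$-actions on $N$ through $\phi_1$ and $\phi_2$. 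Applying the equivalence of the previous paragraph twice gives $\mathbb{P}^{nc}(N)\to\mathbb{P}^{nc}(P\otimes_R N)\to\mathbb{P}^{nc}((P\otimes_R N)\otimes_S L)$, and Lemma \ref{lemma.functorality} identifies the target with $\mathbb{P}^{nc}(N')$. Everything here is formal except for one point: the adjoint structures (\ref{eqn.adjointone}) and (\ref{eqn.adjointtwo}) attached to the bimodule $N\otimes_S L$ are defined through the Eilenberg--Watts theorem and the canonical hom--tensor adjunction, not a priori as conjugates of the corresponding structures for $N$, so one must verify that $-\otimes_S L$ and $-\otimes_{S'}L'$ intertwine them, equivalently that the relation modules $Q_i$ and $Q'_i$ match up as claimed. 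Reconciling the units of these conjugated, composite adjunctions is the single genuinely technical step; the canonicity of each ingredient (coevaluation, Eilenberg--Watts, hom--tensor) is what makes it go through.
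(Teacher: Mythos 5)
Your argument is correct and follows the same route as the paper: admissibility is deduced from Lemma~\ref{lemma.dualityandtensor} together with the fact that tensor products of finitely generated projective bimodules remain so, the equivalences on $\mathbb{P}^{nc}$ come from Morita-twisting the $\mathbb{Z}$-algebra by $L$ (resp.\ $P$), and the last assertion is obtained by composing the two and identifying $N'\cong P\otimes_R N\otimes_S L$ via Lemma~\ref{lemma.functorality}. The paper simply delegates the verification of the equivalences to \cite[Lemma 4.1 and Theorem 4.1]{decomp}, so the content is identical; you spell out the twisting argument directly (and your invocation of Proposition~\ref{prop.newreln}, which is stated only for free admissible bimodules, is unnecessary since $Q_i$ is by definition the image of the unit of the adjunction, a basis-free description).
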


\begin{proof}
The first part follows from Lemma \ref{lemma.dualityandtensor} and the fact that the tensor product of two bimodules which are finitely generated projective on each side is finitely generated projective on each side.  It follows that the noncommutative spaces in the conclusion exist.

The proof that the stated equivalences exist is almost identical to the proof of \cite[Lemma 4.1 and Theorem 4.1]{decomp} in light of Lemma \ref{lemma.dualityandtensor}, so we omit the details.

Finally, the last statement follows from the first.
\end{proof}

\section{Tsen's theorem over noncommutative bases and commutative conics} \label{section.tsen}
Throughout the rest of this paper, if ${\sf H}$ is a noncommutative curve of genus zero and $\tau$ is an AR-translation functor on ${\sf H}$, then we let $\tau^{-1}$ denote a right (and left) adjoint to $\tau$.  We say a bimodule $M$ over division rings is an $(m,n)$-bimodule if it has left dimension $m$ and right dimension $n$.

\subsection{Tsen's theorem and an immediate consequence}
Our goal in this section is to prove the following theorem, and to deduce some straightforward consequences.
\begin{theorem} \label{thm.newtsen}
Let $\sf H$ be a noncommutative curve of genus zero with underlying bimodule $M$ and let $\tilde{{\sf H}}$ denote the unique locally noetherian category whose subcategory of noetherian objects is ${\sf H}$. Then there is an equivalence
\begin{equation} \label{eqn.newmainequiv}
\tilde{H} \rightarrow \mathbb{P}^{nc}(M).
\end{equation}
Conversely, if $M$ is a $(2,2)$- or $(1,4)$-bimodule over division rings finite dimensional over $k$, then $\mathbb{P}^{nc}(M)$ is a noncommutative curve of genus zero with $M$ as an underlying bimodule.
\end{theorem}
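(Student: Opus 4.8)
The plan is to treat the two assertions separately: the equivalence (\ref{eqn.newmainequiv}) by adapting the proof of \cite[Theorem 3.10]{tsen}, and the converse by verifying, one at a time, the defining conditions of a noncommutative curve of genus zero.

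For (\ref{eqn.newmainequiv}) I would run the argument of \cite{tsen}. From a line bundle $\mathcal{L}$ on ${\sf H}$, with AR successor $\overline{\mathcal{L}}$, one builds the helix of bundles $\{\mathcal{L}_{i}\}_{i\in\mathbb{Z}}$ generated by $\mathcal{L}$ and $\overline{\mathcal{L}}$ under iteration of the AR translation and its inverse; one assembles the $\operatorname{Hom}$-groups among the $\mathcal{L}_{i}$ into a $\mathbb{Z}$-algebra $\Pi$; one proves $\Pi\cong\mathbb{S}^{nc}(M)$ for the underlying bimodule $M$; and one invokes the ample-sequence reconstruction machinery of \cite{stafford}, exactly as in \cite{tsen}, to conclude $\tilde{{\sf H}}\equiv\mathbb{P}^{nc}(M)$. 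The only place where the hypothesis of \cite{tsen} that $\operatorname{End}(\mathcal{L})$ and $\operatorname{End}(\overline{\mathcal{L}})$ be commutative is used is in identifying the defining relations $R_{ij}$ of $\mathbb{S}^{nc}(M)$ with the kernels of the composition maps of $\Pi$, which by \cite[Lemma 3.3]{tsen} reduces to computing the images of the units of the adjoint pairs (\ref{eqn.adjointone}) and (\ref{eqn.adjointtwo}) applied to the base rings. Proposition \ref{prop.newreln} carries out exactly that computation under the weaker hypothesis that the relevant bimodules be free of finite rank on each side; and since a division ring finite dimensional over $k$ is a simple finite dimensional $k$-algebra, $M$ and all of its iterated duals are free of finite rank on both sides, so $M$ is free admissible and Proposition \ref{prop.newreln} applies. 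Likewise $\mathbb{S}^{nc}(M)$ is noetherian, the arguments of \cite{vandenbergh} requiring only that the base rings be simple and finite dimensional over $k$. Hence the proof of \cite[Theorem 3.10]{tsen} carries over essentially verbatim, and I expect the only real work here to be an audit confirming that no further step of \cite{tsen} covertly uses commutativity of the endomorphism rings.

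For the converse, let $M$ be a $D_{1}-D_{2}$-bimodule of dimension type $(2,2)$ or $(1,4)$, with $D_{1},D_{2}$ division rings finite dimensional over $k$. These are simple finite dimensional $k$-algebras, so $M$ is free admissible, $\mathbb{S}^{nc}(M)$ and $\mathbb{P}^{nc}(M)$ are defined, $\mathbb{S}^{nc}(M)$ is noetherian by \cite{vandenbergh}, and ${\sf coh }\mathbb{P}^{nc}(M)$ is small, abelian and noetherian. Writing $P_{i}$ for the image of the projective module $e_{i}\mathbb{S}^{nc}(M)$ in $\mathbb{P}^{nc}(M)$, the object $T:=P_{0}\oplus P_{1}$ is a tilting bundle \cite{vandenbergh}, $P_{0}$ has infinite length, and $\operatorname{End}(T)\cong\left(\begin{smallmatrix}D_{1}&M\\0&D_{2}\end{smallmatrix}\right)$ is a finite dimensional hereditary $k$-algebra; hence the resulting derived equivalence with ${\sf mod }\operatorname{End}(T)$ supplies a Serre functor, and with it an Auslander--Reiten translation $\tau$, on ${\sf coh }\mathbb{P}^{nc}(M)$. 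It then remains to check that $\operatorname{Ext}^{1}(\mathcal{S},\mathcal{S})\neq 0$ for every simple object $\mathcal{S}$, equivalently that $\tau\mathcal{S}\cong\mathcal{S}$: from Van den Bergh's description of the simple torsion objects of $\mathbb{P}^{nc}(M)$ and of their Auslander--Reiten structure one checks that for dimension types $(2,2)$ and $(1,4)$ every tube of simples is homogeneous, whereas the intermediate types $(1,2)$ and $(1,3)$ would produce an exceptional tube of rank $2$ or $3$; this is why the dimension of $M$ must be so restricted. Finally, one checks that $P_{1}$ is a line bundle, that the AR sequence starting at $P_{1}$ exhibits $P_{0}$ as $\overline{P_{1}}$, and that $\operatorname{Hom}(P_{1},P_{0})$ is the degree-$(0,1)$ component $M$ of $\mathbb{S}^{nc}(M)$, with $\operatorname{End}(P_{0})\cong D_{1}$ acting on the left and $\operatorname{End}(P_{1})\cong D_{2}$ on the right; hence $M$ is an underlying bimodule of ${\sf coh }\mathbb{P}^{nc}(M)$.

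I expect the main obstacle to lie in the converse: the homogeneity check---establishing that the dimension types $(2,2)$ and $(1,4)$ are precisely those for which $\tau$ fixes every simple object---together with the bookkeeping identifying $\overline{P_{1}}$ with $P_{0}$ and pinning down the bimodule structure on $\operatorname{Hom}(P_{1},P_{0})$. The remaining conditions in the converse, and the whole of the forward direction, should be routine given Proposition \ref{prop.newreln} and the results cited from \cite{vandenbergh}, \cite{stafford} and \cite{tsen}.
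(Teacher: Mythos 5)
Your treatment of the forward direction correctly identifies Proposition~\ref{prop.newreln} as the new ingredient that lets the computation of the unit images go through without commutativity of the endomorphism rings, and the observation that division rings finite dimensional over $k$ make $M$ free admissible is exactly right. However, your closing remark---that the only remaining work is ``an audit confirming that no further step of \cite{tsen} covertly uses commutativity''---is where the proposal comes up short, because the audit in fact turns up a further obstruction. The paper's proof explicitly records that \cite[Proposition~3.8]{tsen}, in the $(2,2)$ case, \emph{does} rely on commutativity of $\operatorname{End}(\mathcal{L})$ and $\operatorname{End}(\overline{\mathcal{L}})$, and that removing this hypothesis is nontrivial: it requires the separate result of \cite{newchan}, cited as work in progress, that the commutativity assumption there can be dropped. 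Your proposal gives no argument that would replace \cite[Proposition~3.8]{tsen} in the noncommutative $(2,2)$ case, so as written it leaves a genuine hole precisely at the point the paper acknowledges is the hard part.

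On the converse, the paper does not carry out the axiom-by-axiom verification you sketch (tilting bundle, object of infinite length, Serre functor/AR translation, homogeneity of all tubes, identification of $\overline{P_1}$ with $P_0$). Instead it treats the converse as already contained in the biconditional statement of \cite[Theorem~3.10]{tsen}, modified only by Proposition~\ref{prop.newreln} and the \cite{newchan} fix. Your more hands-on approach is a reasonable alternative strategy, and the idea of excluding types $(1,2)$ and $(1,3)$ via exceptional tubes is in the right spirit, but note that the claims you would need---Van den Bergh's AR structure on the torsion simples, the identification $\operatorname{End}(T)\cong\bigl(\begin{smallmatrix}D_1&M\\0&D_2\end{smallmatrix}\bigr)$, and the homogeneity dichotomy by dimension type---are each substantial assertions that your sketch does not substantiate, whereas the paper avoids proving them by outsourcing to \cite{tsen}. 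So even setting aside the $(2,2)$ gap, the proposal would need considerably more detail on the converse to stand on its own.
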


\begin{proof}
As a consequence of Section \ref{section.defn}, the space $\mathbb{P}^{nc}(M)$ is well defined.  Furthermore, by Proposition \ref{prop.newreln}, the proof of \cite[Theorem 3.10]{tsen} applies to our situation except for the proof of \cite[Proposition 3.8]{tsen} in case $M$ is a $(2,2)$-bimodule, which relies on the commutativity of $\operatorname{End}(\mathcal{L})$ and $\operatorname{End}(\overline{\mathcal{L}})$.  However, it is proven in \cite{newchan} that the commutativity assumption is not necessary.
\end{proof}
In order to prove Theorem \ref{thm.equivs} below, we will need Corollary \ref{cor.mstar}.  To prove it, we will invoke the $\mathbb{Z}$-algebra which we now define.  We let
\begin{equation} \label{eqn.oi}
\mathcal{O}(n) := \begin{cases} \tau^{\frac{-n}{2}} \mathcal{L} & \mbox{if $n$ is even} \\ \tau^{\frac{-(n-1)}{2}} \overline{\mathcal{L}} & \mbox{if $n$ is odd.} \end{cases}
\end{equation}
We define a $\mathbb{Z}$-algebra $C$ by setting
$$
C_{ij} = \begin{cases} \operatorname{Hom}(\mathcal{O}(-j),\mathcal{O}(-i)) & \mbox{if $j \geq i$} \\ 0 & \mbox{if $i>j$}\end{cases}
$$
and defining multiplication as composition.  The $\mathbb{Z}$-algebra $C$ is equal to the algebra $H(1)$ defined in \cite[Section 3]{tsen}.

\begin{lemma} \label{lemma.c}
If $M$ is an underlying bimodule of ${\sf H}$, then there is an isomorphism of $\mathbb{Z}$-algebras $\mathbb{S}^{nc}(M^{*}) \rightarrow C$.  Therefore,
$$
\tilde{\sf H} \equiv {\sf Proj }C \equiv \mathbb{P}^{nc}(M^{*}).
$$
\end{lemma}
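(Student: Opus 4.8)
The plan is to build the isomorphism $\mathbb{S}^{nc}(M^{*}) \to C$ of $\mathbb{Z}$-algebras by matching components, and then to deduce the displayed equivalences formally. First I would note that $\mathbb{S}^{nc}(M^{*})$ is defined at all: $\operatorname{End}(\mathcal{L})$ and $\operatorname{End}(\overline{\mathcal{L}})$ are division rings finite dimensional over $k$, hence finite dimensional simple $k$-algebras, so $M$ and $M^{*}$ are automatically admissible by the remark following the definition of admissibility. For the diagonal components, $C_{ii} = \operatorname{End}(\mathcal{O}(-i))$, and since $\tau$ is an equivalence this is isomorphic to $\operatorname{End}(\mathcal{L})$ for $i$ even and to $\operatorname{End}(\overline{\mathcal{L}})$ for $i$ odd, matching the diagonal of $\mathbb{S}^{nc}(M^{*})$. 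For the degree-one components, applying a suitable power of $\tau$ reduces $C_{ii+1} = \operatorname{Hom}(\mathcal{O}(-i-1),\mathcal{O}(-i))$ to a $\tau$-twist of $\operatorname{Hom}(\tau\overline{\mathcal{L}},\mathcal{L})$ or $\operatorname{Hom}(\overline{\mathcal{L}},\mathcal{L})$, and the duality among $\operatorname{Hom}$ spaces of the $\mathcal{O}(n)$ together with the left/right dual formalism of Section \ref{section.defn} identifies these with the iterated duals $(M^{*})^{i*}$ as $\operatorname{End}$-bimodules. All of this is carried out in \cite[Section 3]{tsen} for the algebra $H(1)$, which by construction equals $C$.

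The remaining, and I expect main, point is to see that the multiplication of $C$ matches that of $\mathbb{S}^{nc}(M^{*})$, i.e. that $C$ is generated by the degree-one pieces $C_{ii+1}$ and that the multiplication maps $C_{ii+1} \otimes C_{i+1\,j} \to C_{ij}$ are surjective with kernel exactly the submodule $R_{ij}$ appearing in the construction of $\mathbb{S}^{nc}$. This is precisely what the proof of \cite[Theorem 3.10]{tsen} establishes; the only step there that used commutativity of $\operatorname{End}(\mathcal{L})$ and $\operatorname{End}(\overline{\mathcal{L}})$ was the explicit description of the units of the relevant adjoint pairs, and Proposition \ref{prop.newreln} now provides that description --- the units applied to $F_{i}$ have images given by $\eta_{i}$ and $\eta_{i}'$ of (\ref{eqn.unit1}) and (\ref{eqn.unit2}) --- without any commutativity hypothesis. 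So the verification is essentially the bookkeeping of running the argument of \cite[Theorem 3.10]{tsen} with Proposition \ref{prop.newreln} in place of its commutative counterpart; Lemma \ref{lemma.functorality} can be used to propagate the componentwise identifications to an actual $\mathbb{Z}$-algebra isomorphism.

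Finally, granting the isomorphism $\mathbb{S}^{nc}(M^{*}) \cong C$, the chain $\tilde{\sf H} \equiv {\sf Proj }C \equiv \mathbb{P}^{nc}(M^{*})$ follows at once: the first equivalence is established in the course of proving \cite[Theorem 3.10]{tsen}, which identifies $\tilde{\sf H}$ with the quotient category attached to $H(1) = C$, and this identification is valid in the present generality by Theorem \ref{thm.newtsen} and its proof; the second equivalence is immediate from the definition $\mathbb{P}^{nc}(M^{*}) = {\sf Proj }\,\mathbb{S}^{nc}(M^{*})$, since isomorphic $\mathbb{Z}$-algebras have isomorphic categories of graded modules and hence equivalent quotient categories.
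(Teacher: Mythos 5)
Your plan to build $\mathbb{S}^{nc}(M^{*}) \to C$ by matching diagonal pieces, degree-one pieces, and multiplication would work, but it takes a more laborious route than the paper and glosses over a shift that you need to account for. The paper's proof is a two-step shortcut: first, \cite[Theorem 3.10]{tsen} already yields an isomorphism $C(-1) \to \mathbb{S}^{nc}(M)$ (note that $C = H(1)$, so $C(-1) = H$, and it is $H$, not $H(1)$, that Theorem~3.10 identifies with $\mathbb{S}^{nc}(M)$); shifting gives $C \cong \mathbb{S}^{nc}(M)(1)$. Second, \cite[Lemma 2.1]{tsen}, applied repeatedly, gives $\mathbb{S}^{nc}(M)(1) \cong \mathbb{S}^{nc}(M^{*})$. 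This avoids redoing any of the componentwise matching for the shifted algebra. Your statement that the multiplication matching ``is precisely what the proof of \cite[Theorem 3.10]{tsen} establishes'' is therefore not literally correct: that proof establishes $H \cong \mathbb{S}^{nc}(M)$, and one still has to transport the isomorphism across the shift by one index, which is exactly where the dual-tower shift lemma \cite[Lemma 2.1]{tsen} is needed. If you instead rerun the component-by-component argument for $C = H(1)$ directly (replacing $M$ by $M^{*}$ in the iterated dual tower and invoking Proposition~\ref{prop.newreln}), the argument should go through, but you would be redoing work that the shift lemma packages neatly.

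For the chain $\tilde{\sf H} \equiv {\sf Proj}\,C \equiv \mathbb{P}^{nc}(M^{*})$ your reasoning is fine in substance; the paper makes the ingredients explicit, citing ampleness of $\{\mathcal{O}(n)\}$ from the proof of \cite[Proposition 3.9]{tsen} and then \cite[Theorem 11.1.1]{stafford} for the first equivalence, with the second equivalence following from the algebra isomorphism. Your appeal to ``the course of proving \cite[Theorem 3.10]{tsen}'' amounts to the same thing, since that proof is built on the same ampleness input.
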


\begin{proof}
There is an isomorphism $C(-1) \rightarrow \mathbb{S}^{nc}(M)$ by \cite[Theorem 3.10]{tsen}, which induces an isomorphism $C \rightarrow \mathbb{S}^{nc}(M)(1)$.  Furthermore, by repeated application of \cite[Lemma 2.1]{tsen}, one can show that there is an isomorphism $\mathbb{S}^{nc}(M)(1) \rightarrow \mathbb{S}^{nc}(M^{*})$.

To prove the second result, we note that the sequence $\{\mathcal{O}(n)\}_{n \in \mathbb{Z}}$ is ample in the sense of \cite[Section 3]{tsen} by the proof of \cite[Proposition 3.9]{tsen}.  Therefore, by \cite[Theorem 11.1.1]{stafford}, there is an equivalence $\tilde{\sf H} \equiv {\sf Proj }C$.  The last equivalence follows from the first paragraph.
\end{proof}
The next result follows immediately from Theorem \ref{thm.newtsen} and Lemma \ref{lemma.c}:
\begin{cor} \label{cor.mstar}
If $M$ is either a $(2,2)$- or $(1,4)$-bimodule over division rings finite dimensional over $k$, then there is an equivalence $\mathbb{P}^{nc}(M) \rightarrow \mathbb{P}^{nc}(M^{*})$.
\end{cor}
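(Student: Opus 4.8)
The plan is to combine the two main ingredients assembled above: Theorem~\ref{thm.newtsen}, which produces the equivalence $\tilde{\sf H} \to \mathbb{P}^{nc}(M)$ for a noncommutative curve of genus zero $\sf H$, and Lemma~\ref{lemma.c}, which identifies $\tilde{\sf H}$ with $\mathbb{P}^{nc}(M^{*})$ via the $\mathbb{Z}$-algebra $C$. So the strategy is: start with the given $(2,2)$- or $(1,4)$-bimodule $M$, use the converse direction of Theorem~\ref{thm.newtsen} to realize $\mathbb{P}^{nc}(M)$ as $\tilde{\sf H}$ for a noncommutative curve of genus zero $\sf H$ having $M$ as its underlying bimodule, and then apply Lemma~\ref{lemma.c} to this same $\sf H$ to obtain $\tilde{\sf H} \equiv \mathbb{P}^{nc}(M^{*})$. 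Chaining the two equivalences gives $\mathbb{P}^{nc}(M) \equiv \tilde{\sf H} \equiv \mathbb{P}^{nc}(M^{*})$, which is the assertion. The corollary even says ``follows immediately,'' so this is essentially a bookkeeping argument.

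First I would invoke the second sentence of Theorem~\ref{thm.newtsen}: since $M$ is a $(2,2)$- or $(1,4)$-bimodule over division rings finite dimensional over $k$, the category $\mathbb{P}^{nc}(M)$ is a noncommutative curve of genus zero, and moreover $M$ occurs as \emph{an} underlying bimodule of it — i.e.\ there is a choice of line bundle $\mathcal{L}$ and AR data for which the associated bimodule ${}_{\operatorname{End}(\overline{\mathcal{L}})}\operatorname{Hom}(\mathcal{L},\overline{\mathcal{L}})_{\operatorname{End}(\mathcal{L})}$ is isomorphic to $M$. Set ${\sf H}$ to be the subcategory of noetherian objects of $\mathbb{P}^{nc}(M)$, so $\tilde{\sf H} = \mathbb{P}^{nc}(M)$ by uniqueness of the locally noetherian hull. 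Second, I would apply Lemma~\ref{lemma.c} to this $\sf H$: since $M$ is an underlying bimodule of $\sf H$, the lemma gives $\tilde{\sf H} \equiv \mathbb{P}^{nc}(M^{*})$. Combining, $\mathbb{P}^{nc}(M) = \tilde{\sf H} \equiv \mathbb{P}^{nc}(M^{*})$, as desired.

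One point worth stating carefully is that Lemma~\ref{lemma.c} as quoted requires $M^{*}$ itself to be a bimodule for which $\mathbb{S}^{nc}(M^{*})$ is defined; this is guaranteed because admissibility is preserved under taking duals (indeed $M$ is free admissible when the endomorphism rings are division rings finite dimensional over $k$, as noted in Section~\ref{section.defn}), and because the dual of a $(2,2)$-bimodule is a $(2,2)$-bimodule while the dual of a $(1,4)$-bimodule is a $(4,1)$-bimodule — both still fall within the scope of the constructions and of Theorem~\ref{thm.newtsen} (after swapping the roles of left and right, which only transposes $R$ and $S$ in $\mathbb{S}^{nc}$). I would include a sentence to the effect that taking left/right duals interchanges the two endomorphism rings but does not affect the validity of the equivalence, so no case analysis beyond what is already in Theorem~\ref{thm.newtsen} is needed.

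The only genuine subtlety — and the step I would flag as the crux — is making sure the \emph{same} curve $\sf H$ is used in both invocations, so that the $M$ of Theorem~\ref{thm.newtsen} and the $M$ of Lemma~\ref{lemma.c} are literally the same bimodule up to the isomorphism supplied by ``$M$ is an underlying bimodule.'' Once that identification is pinned down, Lemma~\ref{lemma.functorality} (isomorphic admissible bimodules have isomorphic noncommutative symmetric algebras, hence equivalent $\mathbb{P}^{nc}$'s) lets me replace the a priori abstractly-defined underlying bimodule of $\sf H$ by $M$ on the nose, and the proof closes. I do not anticipate any hard analytic or homological input beyond what Theorem~\ref{thm.newtsen} and Lemma~\ref{lemma.c} already package; the work is entirely in lining up the two statements.
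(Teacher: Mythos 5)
Your argument is exactly the one the paper has in mind: invoke the converse direction of Theorem~\ref{thm.newtsen} to realize $\mathbb{P}^{nc}(M)$ as $\tilde{\sf H}$ for a genus-zero curve $\sf H$ having $M$ as underlying bimodule, then apply Lemma~\ref{lemma.c} to that same $\sf H$ to get $\tilde{\sf H}\equiv\mathbb{P}^{nc}(M^{*})$, and chain the equivalences. Your extra remarks about admissibility of $M^{*}$ and the left/right swap are harmless but not needed, since Lemma~\ref{lemma.c} already asserts the equivalence $\tilde{\sf H}\equiv\mathbb{P}^{nc}(M^{*})$ directly and we never re-apply Theorem~\ref{thm.newtsen} to $M^{*}$.
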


\subsection{Commutativity of diagram (\ref{eqn.ncdig})}
In this section, we explicitly define diagram (\ref{eqn.ncdig}) and prove that it is commutative.  First recall that if $\sigma$ is an auto-equivalence of ${\sf H}$ then the abelian group $\bigoplus_{j \geq 0}\operatorname{Hom}_{\sf H}(\sigma^{-j}\mathcal{L},\mathcal{L})$ can be made into a $\mathbb{Z}$-graded $k$-algebra by defining multiplication as follows: for $a \in \operatorname{Hom}_{\sf H}(\sigma^{-i}\mathcal{L},\mathcal{L})$ and $b \in \operatorname{Hom}_{\sf H}(\sigma^{-j}\mathcal{L},\mathcal{L})$, we let $a \cdot b := a \circ \sigma^{-i}(b)$.

We let the top functor in (\ref{eqn.ncdig}),
$$
{\sf H} \longrightarrow {\sf proj }(\bigoplus_{j \geq 0}\operatorname{Hom}_{\sf H}(\tau^{j}\mathcal{L},\mathcal{L})),
$$
be defined by sending an object $\mathcal{M}$ to the object in the quotient category corresponding to the graded right module
$$
\bigoplus_{j \geq 0}\operatorname{Hom}_{\sf H}(\tau^{j}\mathcal{L},\mathcal{M})
$$
with obvious action, and by sending a morphism to the usual induced morphism (compare with the equivalence from \cite[Theorem 4.5]{az}).  The fact that this functor is an equivalence will follow from the commutativity of (\ref{eqn.ncdig}).

We define the left vertical to be the equivalence from Lemma \ref{lemma.c}.  In particular, it takes $\mathcal{M}$ to the image (under the quotient functor) of the graded right $C$-module $\bigoplus_{j}\operatorname{Hom}_{\sf H}(\mathcal{O}(-j),\mathcal{M})$ with obvious action by $C$ and with the obvious assignment on morphisms.  The fact that the lower left category in (\ref{eqn.ncdig}) is equivalent to the full subcategory of noetherian objects of $\mathbb{P}^{nc}(M)$ follows from Theorem \ref{thm.newtsen}.

Since the the lower left category is noetherian, its 2-Veronese is an equivalence by the remark following Lemma \ref{lemma.veronese}, and is induced by sending graded modules and morphisms to their even components.  This is the bottom horizontal functor of (\ref{eqn.ncdig}).

We now define the right vertical functor.  One can check explicitly that $C^{(2)}$ is one-periodic and that there is thus an induced equivalence
$$
{\sf proj }C^{(2)} \rightarrow {\sf proj }(\bigoplus_{j \geq 0}\operatorname{Hom}_{\sf H}(\tau^{j}\mathcal{L},\mathcal{L}))
$$
by Lemma \ref{lemma.veronese}.  We define the right vertical to be this equivalence.

The fact that (\ref{eqn.ncdig}) commutes is now a straightforward computation which we leave to the reader.

\subsection{Commutative conics from the noncommutative perspective} \label{subsec.conic}
Now suppose $\operatorname{char }k \neq 2$.  Let $a,b \in k$ be nonzero such that $(a,b)$ is the non-split quaternion algebra with $i^2=a$ and $j^2=b$.  Let $C(a,b)$ denote its associated conic in $\mathbb{P}^{2}_{k}$ defined as the zero set of $aX^2+bY^2-Z^2$.

In light of Theorem \ref{thm.newtsen}, the following result is implicit in \cite{kussintwo}.  For the readers convenience, we include a proof.
\begin{lemma} \label{lemma.prewitt}
The category ${\sf coh }C(a,b)$ is equivalent to ${\sf coh }\mathbb{P}^{nc}({{}_{(a,b)}}{(a,b)}_{k})$.
\end{lemma}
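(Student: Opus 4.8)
The plan is to realize ${\sf coh }C(a,b)$ as a noncommutative curve of genus zero, to compute its underlying bimodule, to identify that bimodule with ${}_{(a,b)}(a,b)_{k}$, and then to invoke Theorem \ref{thm.newtsen} together with Lemma \ref{lemma.functorality}. First I would check that ${\sf H} := {\sf coh }C(a,b)$ is a (homogeneous) noncommutative curve of genus zero in the sense recalled in the introduction: it is small, abelian and noetherian, $\mathcal{O}_{C} \oplus \overline{\mathcal{O}_{C}}$ (with $\overline{\mathcal{O}_{C}}$ as below) is a tilting bundle, $\mathcal{O}_{C}$ has infinite length, the Serre functor $- \otimes \omega_{C}$ provides an Auslander-Reiten translation $\tau$, and for every closed point $x \in C$ a local computation in the discrete valuation ring $\mathcal{O}_{C,x}$ gives $\operatorname{Ext}^{1}_{\sf H}(\kappa(x),\kappa(x)) \cong \kappa(x) \neq 0$, so that every simple object is self-extending. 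All of this is standard; see \cite{kussin}.

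Next I would compute the underlying bimodule $M$ of ${\sf H}$ using the line bundle $\mathcal{L} = \mathcal{O}_{C}$. Since $C$ is geometrically integral, $\operatorname{End}(\mathcal{O}_{C}) = H^{0}(C,\mathcal{O}_{C}) = k$. The Auslander-Reiten sequence starting at $\mathcal{O}_{C}$ reads $0 \to \mathcal{O}_{C} \to \overline{\mathcal{O}_{C}} \to \tau^{-1}\mathcal{O}_{C} \to 0$ with $\tau^{-1}\mathcal{O}_{C} = \omega_{C}^{*} \cong \mathcal{O}_{C}(1)$ of (geometric) degree $2$; after extending scalars to a quadratic field $k'$ splitting $(a,b)$ it becomes the Euler sequence $0 \to \mathcal{O} \to \mathcal{O}(1)^{\oplus 2} \to \mathcal{O}(2) \to 0$ on $\mathbb{P}^{1}_{k'}$. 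Since $C$ has no line bundle of odd geometric degree, a Krull-Schmidt argument after base change shows that the middle term $\overline{\mathcal{O}_{C}}$ is an indecomposable rank-two bundle, so it is indeed the bundle $\overline{\mathcal{L}}$ of the introduction; moreover $\operatorname{Hom}_{\sf H}(\mathcal{O}_{C},\overline{\mathcal{O}_{C}}) = H^{0}(C,\overline{\mathcal{O}_{C}})$ has $k$-dimension $4$ (it becomes $H^{0}(\mathbb{P}^{1}_{k'},\mathcal{O}(1)^{\oplus 2})$ after base change), and $\operatorname{End}(\overline{\mathcal{O}_{C}}) \otimes_{k} k' \cong M_{2}(k')$, so $\operatorname{End}(\overline{\mathcal{O}_{C}})$ is a quaternion $k$-algebra. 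The point now is that this quaternion algebra is isomorphic to $(a,b)$ exactly; this is the assertion that is implicit in \cite{kussintwo}, and it can also be obtained from \cite[Remark 3.7]{conics} or proved directly by realizing $\overline{\mathcal{O}_{C}}$ as a Clifford/spinor bundle attached to the quadratic form $aX^{2}+bY^{2}-Z^{2}$, whose even Clifford algebra is $(a,b)$.

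Granting $\operatorname{End}(\mathcal{O}_{C}) = k$ and $\operatorname{End}(\overline{\mathcal{O}_{C}}) = (a,b)$, the underlying bimodule is $M = {}_{(a,b)}\operatorname{Hom}_{\sf H}(\mathcal{O}_{C},\overline{\mathcal{O}_{C}})_{k}$; it is $k$-central (scalars act centrally in a $k$-linear category) and of right $k$-dimension $4$, hence of left dimension $1$ over the division ring $(a,b)$. Choosing a left $(a,b)$-generator $m$, the map $q \mapsto q \cdot m$ is an isomorphism of left $(a,b)$-modules ${}_{(a,b)}(a,b) \to M$, and it is also right $k$-linear because $k$-centrality forces $c \cdot m = m \cdot c$ for $c \in k \subseteq Z((a,b))$; hence $M \cong {}_{(a,b)}(a,b)_{k}$ as $(a,b)-k$-bimodules. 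Since $(a,b)$ and $k$ are simple and finite dimensional over $k$, $M$ is admissible, so Lemma \ref{lemma.functorality} gives an isomorphism $\mathbb{S}^{nc}(M) \cong \mathbb{S}^{nc}({}_{(a,b)}(a,b)_{k})$, and therefore an equivalence $\mathbb{P}^{nc}(M) \equiv \mathbb{P}^{nc}({}_{(a,b)}(a,b)_{k})$. Finally Theorem \ref{thm.newtsen} gives $\tilde{\sf H} \equiv \mathbb{P}^{nc}(M)$; restricting both sides to their subcategories of noetherian objects, which is an equivalence-invariant notion, yields ${\sf coh }C(a,b) = {\sf H} \equiv {\sf coh }\mathbb{P}^{nc}(M) \equiv {\sf coh }\mathbb{P}^{nc}({}_{(a,b)}(a,b)_{k})$, as claimed.

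The step I expect to be the main obstacle is the exact identification $\operatorname{End}(\overline{\mathcal{O}_{C}}) \cong (a,b)$ — not merely that it is some quaternion algebra, which extension of scalars gives immediately, but that it is the one with $i^{2}=a$ and $j^{2}=b$. Pinning down the correct Brauer class requires either the computation in \cite{kussintwo} or an explicit Galois-descent or Clifford-algebra argument; by comparison, verifying the axioms of a noncommutative curve of genus zero and carrying out the bimodule bookkeeping are routine given Sections \ref{section.defn} and \ref{section.tsen}.
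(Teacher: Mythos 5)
Your proof is correct and takes essentially the same route as the paper: apply Theorem \ref{thm.newtsen} with $\mathcal{L} = \mathcal{O}_{C}$, identify the underlying bimodule with ${}_{(a,b)}(a,b)_{k}$ by computing that $\operatorname{End}(\mathcal{O}_{C}) = k$, that $\overline{\mathcal{O}_{C}}$ has rank two, and that $\operatorname{End}(\overline{\mathcal{O}_{C}}) \cong (a,b)$ (the paper cites \cite[Proposition 4.3]{kussintwo} or \cite[Lemma 3]{oldvdb} for this last step, and you correctly flag it as the crux), and then conclude via Lemma \ref{lemma.functorality} and Proposition \ref{prop.morita}. You supply more of the bimodule bookkeeping than the paper does, but the underlying argument is the same.
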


\begin{proof}
We begin by noting that since $C(a,b)$ has an indecomposable vector bundle, $\overline{\mathcal{L}}$, of rank two \cite[Theorem 2.2]{kussintwo}, Theorem \ref{thm.newtsen} implies that ${\sf coh }C(a,b)$ is $k$-equivalent to  ${\sf coh }\mathbb{P}^{nc}({}_{\operatorname{End}(\overline{\mathcal{L}})}\operatorname{End}(\overline{\mathcal{L}})_{k})$.  Now, the endomorphism rings of rank two indecomposable vector bundles over $C(a,b)$ are all isomorphic by \cite[Corollary 3.5]{kussintwo}.  Therefore, by \cite[Proposition 4.3]{kussintwo}, or by \cite[Lemma 3]{oldvdb} and the fact that $C(a,b)$ is the Brauer-Severi variety of $(a,b)$, we deduce $\operatorname{End}(\overline{\mathcal{L}}) \cong (a,b)$.  The result now follows from Lemma \ref{lemma.functorality} and Proposition \ref{prop.morita}.
\end{proof}

\begin{example} \label{example.quaternion}
Suppose $\operatorname{char }k \neq 2$, let $R := k$, and let $S := D$ be a non-split quaternion algebra over $k$.  Let $N := {}_{D}D_{k}$.  Then there exists a degree two extension $k'$ over $k$  such that $D_{k'}$ is split \cite[Proposition 1.2.3]{csa}.  It follows that if ${\sf Qcoh }\mathbb{P}^{1}_{k'}$ denotes the category of quasi-coherent sheaves over $\mathbb{P}^{1}_{k'}$, then
\begin{eqnarray*}
\mathbb{P}^{nc}(N)_{k'} & = & \mathbb{P}^{nc}(N_{k'}) \\
& \equiv & \mathbb{P}^{nc}(M_{2}(k')) \\
& \equiv & \mathbb{P}^{nc}(k' \oplus k') \\
& \equiv & {\sf Qcoh }\mathbb{P}^{1}_{k'}
\end{eqnarray*}
where the second equivalence is the equivalence from Lemma \ref{lemma.functorality} and Proposition \ref{prop.morita} induced by the isomorphism of bimodules
$$
{}_{D_{k'}}{D_{k'}}_{k'} \rightarrow {}_{M_{2}(k')}M_{2}(k')_{k'},
$$
the third equivalence is the equivalence from Proposition \ref{prop.morita} induced by the canonical equivalence
$$
G: M_{2}(k')-{\sf Mod} \rightarrow k'-{\sf Mod}
$$
sending an object $M$ to $e_{11}M$ and sending a morphism $f:M \rightarrow N$ to its restriction $e_{11}M \rightarrow e_{11}N$, and the last equivalence is from Lemma \ref{lemma.veronese}.
\end{example}

\section{Isomorphism invariants and noncommutative Witt's theorem} \label{section.equivs}
In \cite[Section 5]{nyman}, isomorphism invariants of noncommutative curves of genus zero whose underlying bimodule is a $(2,2)$-bimodule over a pair of isomorphic commutative fields are found.  In this section, using Theorem \ref{thm.newtsen}, we obtain a similar result for arbitrary noncommutative curves of genus zero (Theorem \ref{thm.equivs}), generalizing \cite[Proposition 5.1.4]{kussin}.  We will then use this result to obtain isomorphism invariants of noncommutative conics (those noncommutative curves of genus zero whose underlying bimodule is a $(1,4)$-bimodule).  This provides a generalization of Witt's theorem, as we shall see in Corollary \ref{cor.wittcor}.

\begin{theorem} \label{thm.equivs}
For $i=1,2$, let $D_{i}$ and $E_{i}$ be division rings finite dimensional over $k$, let $M$ be a $D_{1}-D_{2}$-bimodule of left right dimension $(2,2)$ or $(1,4)$ and let $N$ be an $E_{1}-E_{2}$-bimodule of left right dimension $(2,2)$ or $(1,4)$.  There is an equivalence
$$
{\sf H}_{1} = \mathbb{P}^{nc}(M) \longrightarrow \mathbb{P}^{nc}(N) = {\sf H}_{2}
$$
if and only if either
\begin{enumerate}
\item{} there exist isomorphisms $\phi_{i}:D_{i} \rightarrow E_{i}$ and $\psi:M \rightarrow N$ such that
$$
\psi(a \cdot m \cdot b) = \phi_{1}(a) \cdot \psi(m) \cdot \phi_{2}(b),
$$
or

\item{} there exist isomorphisms $\phi_{1}:D_{1} \rightarrow E_{2}$, $\phi_{2}:D_{2} \rightarrow E_{1}$ and $\psi:M \rightarrow N^{*}$ such that
$$
\psi(a \cdot m \cdot b) = \phi_{1}(a) \cdot \psi(m) \cdot \phi_{2}(b).
$$
\end{enumerate}
\end{theorem}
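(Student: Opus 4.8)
The statement is an ``if and only if,'' and I prove the two directions separately; the forward direction (an equivalence of categories forces (1) or (2)) is where the content lies, and it is handled by reconstructing the underlying bimodule, up to the ambiguities listed, from the abelian category. For the direction that (1) or (2) forces an equivalence: if (1) holds, view $N$ as a $D_{1}$--$D_{2}$-bimodule $\tilde N$ via $\phi_{1},\phi_{2}$, so that $\psi\colon M\to\tilde N$ becomes an isomorphism of $D_{1}$--$D_{2}$-bimodules; Lemma \ref{lemma.functorality} then gives an isomorphism $\mathbb{S}^{nc}(M)\to\mathbb{S}^{nc}(\tilde N)$, hence an equivalence $\mathbb{P}^{nc}(M)\to\mathbb{P}^{nc}(\tilde N)$, and composing with the equivalence $\mathbb{P}^{nc}(\tilde N)\to\mathbb{P}^{nc}(N)$ from the last part of Proposition \ref{prop.morita} finishes this case. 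If instead (2) holds, the same argument --- now using the isomorphisms $D_{1}\to E_{2}$, $D_{2}\to E_{1}$ and $\psi\colon M\to N^{*}$ --- yields an equivalence $\mathbb{P}^{nc}(M)\to\mathbb{P}^{nc}(N^{*})$, and Corollary \ref{cor.mstar} provides $\mathbb{P}^{nc}(N^{*})\to\mathbb{P}^{nc}(N)$. All bimodules here are admissible because the $D_{i}$ and $E_{i}$ are simple and finite dimensional over $k$, so every space is defined.

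For the converse, let $F\colon{\sf H}_{1}\to{\sf H}_{2}$ be an equivalence; by Theorem \ref{thm.newtsen}, ${\sf H}_{1}$ and ${\sf H}_{2}$ are noncommutative curves of genus zero with underlying bimodules $M$ and $N$. An equivalence preserves the Serre subcategory of finite length objects, hence the rank and the classes of vector bundles and line bundles; it commutes with the Auslander--Reiten translation and carries Auslander--Reiten sequences to Auslander--Reiten sequences, so $F(\overline{\mathcal{L}})\cong\overline{F(\mathcal{L})}$ for every line bundle $\mathcal{L}$; and it induces isomorphisms on all $\operatorname{Hom}$-spaces compatibly with composition. Fix a line bundle $\mathcal{L}_{1}$ in ${\sf H}_{1}$ realizing $M$ and a line bundle $\mathcal{L}_{2}$ in ${\sf H}_{2}$ realizing $N$. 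Applying $F$ to the data defining $M$ shows that the underlying bimodule of ${\sf H}_{2}$ computed with respect to the line bundle $F(\mathcal{L}_{1})$ is isomorphic to $M$ through a pair of $k$-algebra isomorphisms induced by $F$. It therefore suffices to compare the underlying bimodule of the single curve ${\sf H}_{2}$, taken with respect to the two line bundles $\mathcal{L}_{2}$ and $F(\mathcal{L}_{1})$.

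For this comparison I use the structure theory of genus zero curves, generalizing \cite[Proposition 5.1.4]{kussin}: the line bundles of a genus zero curve with underlying bimodule of type $(2,2)$ or $(1,4)$ lie in at most two $\tau$-orbits --- a single orbit in the $(1,4)$ case, where $\overline{\mathcal{L}}$ has rank two and so is never a line bundle, and in the $(2,2)$ case a second orbit represented by $\overline{\mathcal{L}}$. If $F(\mathcal{L}_{1})=\tau^{n}\mathcal{L}_{2}$, then $\overline{\tau^{n}\mathcal{L}_{2}}=\tau^{n}\overline{\mathcal{L}_{2}}$ (apply $\tau^{n}$ to the Auslander--Reiten sequence at $\mathcal{L}_{2}$), and the underlying bimodule with respect to $F(\mathcal{L}_{1})$ is obtained from $N$ by transport of structure along the $k$-algebra isomorphisms induced by $\tau^{n}$; combined with the previous paragraph this is alternative (1). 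If instead we are in the $(2,2)$ case and $F(\mathcal{L}_{1})$ lies in the second orbit, I compute the underlying bimodule with respect to a suitable representative of that orbit using the $\mathbb{Z}$-algebra $C$ attached to ${\sf H}_{2}$ as in Lemma \ref{lemma.c}: since $\mathbb{S}^{nc}(N^{*})\cong C$ and the one-step components $C_{i,i+1}$ are, by the construction of $\mathbb{S}^{nc}$, the iterated duals $N^{(i+1)*}$ of $N$, this underlying bimodule is a right dual $N^{*}$ of $N$, with left and right base rings identified with $E_{2}$ and $E_{1}$. Hence $M\cong N^{*}$ over isomorphisms $D_{1}\to E_{2}$ and $D_{2}\to E_{1}$, which is alternative (2). (When the underlying bimodules of ${\sf H}_{1}$ and ${\sf H}_{2}$ have different types, neither alternative can hold, matching the fact that then no equivalence exists.)

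The main obstacle is this last computation: pinning down exactly which dual of $N$ is produced on crossing to the second $\tau$-orbit and reconciling it with the right dual $N^{*}$ of alternative (2) rather than the left dual ${}^{*}N$. This requires careful tracking of the left/right conventions in the definitions of $N^{*}$, ${}^{*}N$ and $\mathbb{S}^{nc}(N)$, and --- depending on how the orbit representative is chosen --- an appeal to the $2$-periodicity of $C$, which follows from the identity $\mathcal{O}(n+2)=\tau^{-1}\mathcal{O}(n)$ together with $\tau^{-1}$ being an equivalence. The preliminary verification that $F$ respects all the structure used to define the underlying bimodule, though routine, underlies the entire argument and should be carried out carefully.
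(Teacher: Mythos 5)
Your proof is correct in outline and arrives at the same dichotomy as the paper, but the forward direction is organized differently. Where you invoke the intrinsic fact that an equivalence $F$ commutes with the Auslander--Reiten translation and preserves AR sequences, so that $F(\overline{\mathcal{L}})\cong\overline{F(\mathcal{L})}$ automatically, the paper instead argues concretely: it compares ranks to land $F(\mathcal{L})$ and $F(\overline{\mathcal{L}})$ in specific $\tau$-orbits, then cites \cite[Corollary 3.6]{tsen} to align the two $\tau$-shifts (the ``$i=j$'' step). Your route is more conceptual and buys a cleaner reduction to comparing the underlying bimodule of ${\sf H}_2$ at two different line bundles; the paper's route is more elementary and keeps all the isomorphisms $\phi_1,\phi_2,\psi$ explicit from the start. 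Your easy direction (conditions (1) or (2) imply an equivalence) is essentially identical to the paper's, via Lemma~\ref{lemma.functorality}, Proposition~\ref{prop.morita}, and Corollary~\ref{cor.mstar}.

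The one place your sketch is genuinely short is exactly the point you flag at the end: showing that crossing to the second $\tau$-orbit in the $(2,2)$ case produces the \emph{right} dual $N^*$ rather than ${}^*N$, and that the base rings line up as $D_1\to E_2$, $D_2\to E_1$. The paper closes this by a direct computation, defining $\psi$ as a composite
$$
\operatorname{Hom}_{{\sf H}_1}(\mathcal{L},\overline{\mathcal{L}})
\xrightarrow{\,F\,} \operatorname{Hom}_{{\sf H}_2}(F\mathcal{L},F\overline{\mathcal{L}})
\xrightarrow{\,\cong\,} \operatorname{Hom}_{{\sf H}_2}(\tau^i\overline{\mathcal{L}'},\tau^{i-1}\mathcal{L}')
\xrightarrow{\,(\tau^{i-1})^{-1}\,} \operatorname{Hom}_{{\sf H}_2}(\tau\overline{\mathcal{L}'},\mathcal{L}')
\cong N^*,
$$
where the last isomorphism is the bimodule identification from \cite[Proposition 3.4]{tsen}. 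That citation does precisely the ``convention-tracking'' you deferred. If you want to finish your version as written, you should either reproduce that identification from the $\mathbb{Z}$-algebra $C$ (your observation that $C_{i,i+1}\cong N^{(i+1)*}$ is correct, but you must still match $C_{0,1}\cong N^*$ against $\operatorname{Hom}(\overline{\mathcal{L}_2},\overline{\overline{\mathcal{L}_2}})$ via a $\tau$-shift), or simply cite \cite[Proposition 3.4]{tsen} directly. Also be aware that you implicitly use that an equivalence of abelian categories preserves the AR translation; this is true here, but in a write-up you should say why (the translation is characterized by Serre duality in these hereditary noetherian categories, which is preserved by equivalences).
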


\begin{proof}
First, suppose there is an equivalence $F:\mathbb{P}^{nc}(M) \longrightarrow \mathbb{P}^{nc}(N)$, where $M=\operatorname{Hom}_{\sf H_{1}}(\mathcal{L},\overline{\mathcal{L}})$ and $N=\operatorname{Hom}_{\sf H_{2}}(\mathcal{L}',\overline{\mathcal{L}'})$.  Then $F$ induces an isomorphism of $k$-modules
\begin{equation} \label{eqn.finduced}
\operatorname{Hom}_{{\sf H}_{1}}(\mathcal{L},\overline{\mathcal{L}}) \rightarrow \operatorname{Hom}_{{\sf H}_{2}}(F(\mathcal{L}),F(\overline{\mathcal{L}})).
\end{equation}
We break the proof of the forward direction into two cases:
\newline
\newline
{\it Case 1}: $M$ is a $(1,4)$-bimodule.  By comparing ranks and using the fact that there are exactly two $\tau$-orbits of indecomposable bundles in ${\sf H}_{2}$ \cite[Section 1.1]{kussin}, we know there exist $i, j \in \mathbb{Z}$ such that
\begin{equation} \label{eqn.one}
F(\mathcal{L}) \overset{\cong}{\rightarrow} \tau^{i}\mathcal{L}'
\end{equation}
and
\begin{equation} \label{eqn.two}
F(\overline{\mathcal{L}}) \overset{\cong}{\rightarrow} \tau^{j}\overline{\mathcal{L}'}.
\end{equation}
Furthermore, since the map (\ref{eqn.finduced}) is a $k$-module isomorphism, it follows from \cite[Corollary 3.6]{tsen} that $i=j$.  Therefore $F$ together with the isomorphisms (\ref{eqn.one}) and (\ref{eqn.two}) induces $k$-algebra isomorphisms
$$
\phi_{1}: \operatorname{End}_{{\sf H}_{1}}(\overline{\mathcal{L}})
\rightarrow \operatorname{End}_{{\sf H}_{2}}(\tau^{i}\overline{\mathcal{L}'}) \overset{\tau^{-i}}{\rightarrow} \operatorname{End}_{{\sf H}_{2}}(\overline{\mathcal{L}'})
$$
and
$$
\phi_{2}: \operatorname{End}_{{\sf H}_{1}}(\mathcal{L}) \rightarrow \operatorname{End}_{{\sf H}_{2}}(\tau^{i}\mathcal{L}') \overset{\tau^{-i}}{\rightarrow} \operatorname{End}_{{\sf H}_{2}}(\mathcal{L}'),
$$
whose last maps send $f$ to $\eta (\tau^{-i}f )\eta^{-1}$, where $\eta \in \operatorname{Hom}_{{\sf H}_{2}}(\tau^{-i}\tau^{i}\overline{\mathcal{L}'},\overline{\mathcal{L}'})$ is induced by the adjointness of the pair $(\tau^{-1}, \tau)$.

Finally, we define $\psi$ as the composition
\begin{eqnarray*}
\operatorname{Hom}_{{\sf H}_{1}}(\mathcal{L},\overline{\mathcal{L}}) & \overset{F}{\rightarrow} & \operatorname{Hom}_{{\sf H}_{2}}(F(\mathcal{L}),F(\overline{\mathcal{L}})) \\
& \overset{\cong}{\rightarrow} & \operatorname{Hom}_{{\sf H}_{2}}(\tau^{i}\mathcal{L}', \tau^{i}\overline{\mathcal{L}'}) \\ & \overset{\tau^{-i}}{\rightarrow} & \operatorname{Hom}_{{\sf H}_{2}}(\mathcal{L}', \overline{\mathcal{L}'})
\end{eqnarray*}
whose second arrow is induced by (\ref{eqn.one}) and (\ref{eqn.two}).  It is straightforward to check that $\psi$, $\phi_{1}$ and $\phi_{2}$ satisfy the conclusion of the result.

The second possibility in this case is impossible since $N^{*}$ is a $(4,1)$-bimodule.
\newline
\newline
{\it Case 2:}  In the case that $M$ is a $(2,2)$-bimodule, the proof is similar, with one exception.  The module $F(\mathcal{L})$ can be any indecomposable bundle, since all are line bundles.  If $F(\mathcal{L}) \cong \tau^{i}\mathcal{L}'$, the proof proceeds as above.  Therefore, assume that there is an isomorphism
\begin{equation} \label{eqn.newone}
F(\mathcal{L}) \rightarrow \tau^{i}\overline{\mathcal{L}'}
\end{equation}
for some $i \in \mathbb{Z}$.  Then, as in the first case, there must be an isomorphism
\begin{equation} \label{eqn.newtwo}
F(\overline{\mathcal{L}}) \rightarrow \tau^{i-1}\mathcal{L}'.
\end{equation}
As in the first case, we define $\phi_{1}:\operatorname{End}_{{\sf H}_{1}}(\overline{\mathcal{L}}) \rightarrow \operatorname{End}_{{\sf H}_{2}}(\mathcal{L}')$ and $\phi_{2}:\operatorname{End}_{{\sf H}_{1}}({\mathcal{L}}) \rightarrow \operatorname{End}_{{\sf H}_{2}}(\overline{\mathcal{L}'})$, and we define $\psi:M \rightarrow N^{*}$ as the composition
\begin{eqnarray*}
\operatorname{Hom}_{{\sf H}_{1}}(\mathcal{L}, \overline{\mathcal{L}}) & \overset{F}{\rightarrow} & \operatorname{Hom}_{{\sf H}_{2}}(F(\mathcal{L}), F(\overline{\mathcal{L}})) \\
& \overset{\cong}{\rightarrow}  & \operatorname{Hom}_{{\sf H}_{2}}(\tau^{i}\overline{\mathcal{L}'}, \tau^{i-1}\mathcal{L}') \\
& \overset{(\tau^{i-1})^{-1}}{\rightarrow} & \operatorname{Hom}_{{\sf H}_{2}}(\tau \overline{\mathcal{L}'}, \mathcal{L}') \\
& \cong & N^{*}
\end{eqnarray*}
whose second arrow is induced by (\ref{eqn.newone}) and (\ref{eqn.newtwo}), and whose last isomorphism is the isomorphism of $\operatorname{End}_{{\sf H}_{2}}(\mathcal{L}')-\operatorname{End}_{{\sf H}_{2}}(\overline{\mathcal{L}'})$-bimodules from \cite[Proposition 3.4]{tsen}.  It is now routine to check that $\psi$, $\phi_{1}$ and $\phi_{2}$ satisfy the conclusion.

We now prove the converse.  First, suppose $M$ is a $(1,4)$-bimodule, so that there exist isomorphisms $\phi_{i}:D_{i} \rightarrow E_{i}$ and $\psi:M \rightarrow N$ such that $\psi(a \cdot m \cdot b) = \phi_{1}(a) \cdot \psi(m) \cdot \phi_{2}(b)$.  We let $\tilde{M}$ denote the $E_{1}-E_{2}$-bimodule with underlying set $M$ and left-right actions defined by
$$
c \cdot m \cdot d:= \phi_{1}^{-1}(c)m\phi_{2}^{-1}(d).
$$
By Proposition \ref{prop.morita}, we have an equivalence $\mathbb{P}^{nc}(M) \rightarrow \mathbb{P}^{nc}(\tilde{M})$.  Since $\psi$ induces an isomorphism $\tilde{M} \rightarrow N$ of $E_{1}-E_{2}$-bimodules, it follows from Lemma \ref{lemma.functorality} that $\psi$ induces a $k$-isomorphism of noncommutative symmetric algebras $\mathbb{S}^{nc}(\tilde{M}) \rightarrow \mathbb{S}^{nc}(N)$ and hence an equivalence $\mathbb{P}^{nc}(\tilde{M}) \rightarrow \mathbb{P}^{nc}(N)$.  The result follows in the $(1,4)$-case.

The converse in case $M$ is a $(2,2)$-bimodule is proven as above if we are in situation (1) in the statement of the theorem.  Otherwise, there exist isomorphisms $\phi_{1}:D_{1} \rightarrow E_{2}$, $\phi_{2}:D_{2} \rightarrow E_{1}$ and $\psi:M \rightarrow N^{*}$ such that $\psi(a \cdot m \cdot b) = \phi_{1}(a) \cdot \psi(m) \cdot \phi_{2}(b)$.  In this case, as above, there is an equivalence $\mathbb{P}^{nc}(M) \rightarrow \mathbb{P}^{nc}(N^{*})$.  On the other hand, by Corollary \ref{cor.mstar}, there is an equivalence $\mathbb{P}^{nc}(N^{*}) \rightarrow \mathbb{P}^{nc}(N)$, and the result follows.
\end{proof}

Specializing to the case of $(1,4)$-bimodules, we have the following:

\begin{cor} \label{cor.newwitt}
For $i=1,2$, let $D_{i}$ and $E_{i}$ be division rings finite dimensional over $k$, let $M$ be a $D_{1}-D_{2}$-bimodule of left right dimension $(1,4)$ and let $N$ be an $E_{1}-E_{2}$-bimodule of left right dimension $(1,4)$.  There is an equivalence
$$
\mathbb{P}^{nc}(M) \rightarrow \mathbb{P}^{nc}(N)
$$
if and only if there exist isomorphisms $\phi_{i}:D_{i} \rightarrow E_{i}$ and $\psi:M \rightarrow N$ such that
$$
\psi(a \cdot m \cdot b) = \phi_{1}(a) \cdot \psi(m) \cdot \phi_{2}(b).
$$
\end{cor}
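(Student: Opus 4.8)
The plan is to deduce Corollary \ref{cor.newwitt} directly from Theorem \ref{thm.equivs} by observing that the second alternative of that theorem becomes vacuous once both underlying bimodules are forced to have left--right dimension $(1,4)$. Concretely, I would first apply Theorem \ref{thm.equivs} to the $D_{1}$--$D_{2}$-bimodule $M$ and the $E_{1}$--$E_{2}$-bimodule $N$, both of dimension $(1,4)$; this gives immediately that an equivalence $\mathbb{P}^{nc}(M) \to \mathbb{P}^{nc}(N)$ exists if and only if condition (1) or condition (2) of the theorem holds. The converse direction of the corollary is then nothing to prove, since its hypothesis is literally condition (1) of Theorem \ref{thm.equivs}, which already produces such an equivalence.

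For the forward direction, the only point requiring verification is that condition (2) of Theorem \ref{thm.equivs} cannot occur in the present setting. Here I would compute the left--right dimension of $N^{*}$: since $N$ is an $E_{1}$--$E_{2}$-bimodule of dimension $(1,4)$ over division rings, it is free of rank $1$ on the left and of rank $4$ on the right, so by the elementary properties of the right dual recorded in Section \ref{section.defn}, $N^{*}=\Hom_{E_{2}}(N_{E_{2}},E_{2})$ is free of rank $4$ as a left $E_{2}$-module and of rank $1$ as a right $E_{1}$-module; that is, $N^{*}$ is a $(4,1)$-bimodule. Now suppose, toward a contradiction, that condition (2) held, so that there were ring isomorphisms $\phi_{1}\colon D_{1}\to E_{2}$, $\phi_{2}\colon D_{2}\to E_{1}$ and an additive bijection $\psi\colon M\to N^{*}$ with $\psi(a\cdot m\cdot b)=\phi_{1}(a)\cdot\psi(m)\cdot\phi_{2}(b)$. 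Then $\psi$ would be an isomorphism of $D_{1}$--$D_{2}$-bimodules from $M$ onto $N^{*}$, the latter regarded as a $D_{1}$--$D_{2}$-bimodule via $\phi_{1}$ and $\phi_{2}$; since pulling a bimodule back along ring isomorphisms does not change its left or right dimension, $M$ would be simultaneously $(1,4)$ and $(4,1)$, which is absurd. Hence condition (2) never holds, so condition (1) must, and the corollary follows.

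I do not anticipate any genuine obstacle: the argument is a bookkeeping reduction to Theorem \ref{thm.equivs} together with the rank computation for $N^{*}$, both routine. The only place to be slightly careful is in phrasing the dimension count when the division rings on the two sides are identified only through the isomorphisms $\phi_{i}$ rather than being literally equal, but this is handled by the remark that an $(m,n)$-bimodule restricted along ring isomorphisms remains an $(m,n)$-bimodule.
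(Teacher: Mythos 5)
Your proof is correct and follows essentially the same reasoning the paper relies on: the corollary is a direct specialization of Theorem \ref{thm.equivs}, and the paper itself notes in the proof of that theorem (Case 1) that the second alternative is impossible precisely because $N^{*}$ is a $(4,1)$-bimodule while $M$ is $(1,4)$. Your dimension count for $N^{*}$ and the remark that pulling back along the isomorphisms $\phi_{i}$ preserves left--right dimensions make this precise in the same way the paper intends.
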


We end this paper by describing how to use Corollary \ref{cor.newwitt} to recover Witt's theorem.  Retain the hypotheses and notation from Section \ref{subsec.conic}.  Let $D_{i}:=(a_{i},b_{i})$ for $i=1,2$ be non-split quaternion algebras.

\begin{cor} \label{cor.wittcor}
With the notation above $D_{1}$ is isomorphic to $D_{2}$ if and only if $C(a_{1},b_{1})$ is isomorphic to $C(a_{2},b_{2})$.
\end{cor}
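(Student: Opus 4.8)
The plan is to translate both sides of the biconditional into statements about the noncommutative projective lines attached to the bimodules $M_{i} := {}_{D_{i}}(D_{i})_{k}$, $i=1,2$, and then to invoke Corollary~\ref{cor.newwitt}. Since $D_{i}=(a_{i},b_{i})$ is non-split it is a four-dimensional division algebra over $k$, so $M_{i}$ is a $(1,4)$-bimodule over division rings finite dimensional over $k$ (with $D_{i}$ acting on the left and $k$ on the right). By Lemma~\ref{lemma.prewitt} there is a ($k$-linear) equivalence ${\sf coh}\,C(a_{i},b_{i}) \equiv {\sf coh}\,\mathbb{P}^{nc}(M_{i})$. Hence it suffices to prove that the three conditions (i) $D_{1}\cong D_{2}$ as $k$-algebras, (ii) $\mathbb{P}^{nc}(M_{1})\equiv\mathbb{P}^{nc}(M_{2})$, and (iii) $C(a_{1},b_{1})\cong C(a_{2},b_{2})$ as $k$-schemes, are mutually equivalent.

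First I would prove (i) $\Rightarrow$ (ii) $\Rightarrow$ (iii). Given a $k$-algebra isomorphism $\theta\colon D_{1}\to D_{2}$, set $\phi_{1}:=\theta$, $\phi_{2}:=\operatorname{id}_{k}$ and let $\psi\colon M_{1}\to M_{2}$ be $\theta$ on underlying sets; since $\theta$ restricts to the identity on $k$, one checks at once that $\psi(a\cdot m\cdot b)=\phi_{1}(a)\cdot\psi(m)\cdot\phi_{2}(b)$, so Corollary~\ref{cor.newwitt} (or just Lemma~\ref{lemma.functorality} together with Proposition~\ref{prop.morita}) supplies (ii). Composing the resulting equivalence with the two equivalences of Lemma~\ref{lemma.prewitt} gives a $k$-linear equivalence ${\sf coh}\,C(a_{1},b_{1})\equiv{\sf coh}\,C(a_{2},b_{2})$. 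To deduce (iii) from this I would reconstruct each conic from its sheaf category: the Serre subcategory of finite-length sheaves is intrinsic, so the equivalence descends to a $k$-linear equivalence of the Gabriel quotients, which by \cite[Section~1]{lenzing} are ${\sf mod}\,k(C(a_{i},b_{i}))$; as each of these conics has no $k$-rational point, $k(C(a_{i},b_{i}))$ is an (ordinary, commutative) field, so the Morita equivalence forces $k(C(a_{1},b_{1}))\cong k(C(a_{2},b_{2}))$ as $k$-algebras, and the usual correspondence between smooth projective curves over $k$ and their function fields then yields (iii).

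Next I would prove the easy cycle (iii) $\Rightarrow$ (ii) $\Rightarrow$ (i). An isomorphism of $k$-schemes $C(a_{1},b_{1})\cong C(a_{2},b_{2})$ induces a $k$-linear equivalence of coherent sheaf categories, hence by Lemma~\ref{lemma.prewitt} an equivalence $\mathbb{P}^{nc}(M_{1})\equiv\mathbb{P}^{nc}(M_{2})$ (passing to locally noetherian completions to match the formulation of Corollary~\ref{cor.newwitt}). Corollary~\ref{cor.newwitt} then produces isomorphisms $\phi_{1}\colon D_{1}\to D_{2}$, $\phi_{2}\colon k\to k$ and $\psi\colon M_{1}\to M_{2}$; because $\phi_{1}$ is built, exactly as in the proof of Theorem~\ref{thm.equivs}, by applying a $k$-linear functor to Hom-spaces of $k$-modules and composing with $k$-linear translation functors, it is in fact an isomorphism of $k$-algebras, so $D_{1}\cong D_{2}$ and the cycle closes.

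I expect the only genuinely non-formal input to be the reconstruction step (ii) $\Rightarrow$ (iii); the rest is a mechanical combination of Lemma~\ref{lemma.prewitt} and Corollary~\ref{cor.newwitt}. The point requiring care is $k$-linearity: Corollary~\ref{cor.newwitt} must be applied so that the isomorphism $\phi_{1}$ it produces is an isomorphism of $k$-algebras, not merely of rings inducing some automorphism of the centre $k$ — this is guaranteed by the standing $k$-linearity conventions and by the explicit form of $\phi_{1}$ in the proof of Theorem~\ref{thm.equivs}.
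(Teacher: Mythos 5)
Your proof is correct and follows the paper's structure closely: both directions pass through Lemma~\ref{lemma.prewitt} and Corollary~\ref{cor.newwitt} exactly as the author does, and your observation that $\phi_{1}$ is automatically a $k$-algebra (not merely a ring) isomorphism because of the standing $k$-linearity conventions is the same (unstated) point the paper relies on. The one place you genuinely diverge is the reconstruction step — going from a $k$-linear equivalence ${\sf coh}\,C(a_{1},b_{1})\equiv{\sf coh}\,C(a_{2},b_{2})$ back to a $k$-isomorphism of the conics. The paper simply cites Rosenberg's reconstruction theorem \cite{reconstruction}, which works for arbitrary quasi-separated schemes. You instead argue by hand for curves: pass to the Gabriel quotient by the Serre subcategory of finite-length objects, identify it with ${\sf mod}\,k(C(a_{i},b_{i}))$ via \cite[Section~1]{lenzing}, use $k$-linear Morita theory to get a $k$-algebra isomorphism of the function fields, and then invoke the standard equivalence between smooth projective curves over $k$ and their function fields. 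Your route is more elementary and self-contained for the case at hand; the paper's citation is shorter and more robustly general. One cosmetic remark: the parenthetical appeal to "no $k$-rational point" is irrelevant to your argument — the function field of a commutative scheme is always a commutative field, and the relevant Morita step (commutative rings that are $k$-linearly Morita equivalent are $k$-isomorphic, or, just as well, division rings that are $k$-linearly Morita equivalent are $k$-isomorphic) does not depend on rational points. This does not affect correctness.
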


\begin{proof}
First, suppose there is a $k$-algebra isomorphism $\phi_{1}:D_{1} \rightarrow D_{2}$. We define $\phi_{2}:k \rightarrow k$ to be the identity map and we let $\psi:D_{1} \rightarrow D_{2}$ equal $\phi_{1}$.  It follows that $\psi$ is compatible with $\phi_{1}$ and $\phi_{2}$ so that Corollary \ref{cor.newwitt} implies that there is an equivalence
$$
\mathbb{P}^{nc}({}_{D_{1}}{D_{1}}_{k}) \rightarrow \mathbb{P}^{nc}({}_{D_{2}}{D_{2}}_{k}).
$$
Therefore, by Lemma \ref{lemma.prewitt}, ${\sf coh }C(a_{1},b_{1})$ is equivalent to ${\sf coh }C(a_{2},b_{2})$ so that $C(a_{1},b_{1})$ is $k$-isomorphic to $C(a_{2},b_{2})$ by Rosenberg's reconstruction theorem \cite{reconstruction}.

Conversely, if $C(a_{1},b_{1})$ is $k$-isomorphic to $C(a_{2},b_{2})$, then by Lemma \ref{lemma.prewitt} again, there is an equivalence
$$
\mathbb{P}^{nc}({}_{D_{1}}{D_{1}}_{k}) \rightarrow \mathbb{P}^{nc}({}_{D_{2}}{D_{2}}_{k}),
$$
so that the result follows from Corollary \ref{cor.newwitt}.
\end{proof}


\begin{thebibliography}{11}



\bibitem{az} M. Artin and J. J. Zhang, Noncommutative projective schemes, {\it Adv. Math.} {\bf 109} (1994), no. 2, 228-287.

\bibitem{conics} I. Biswas and D.S. Nagaraj, Vector bundles over a nondegenerate conic, {\it J. Aust. Math. Soc.} {\bf 86} (2009), 145-154.


\bibitem{newchan} D. Chan and A. Nyman, Species and noncommutative $\mathbb{P}^1$'s over non-algebraic bimodules, {\it in progress}.

\bibitem{cohn} P. M. Cohn, Some remarks on the invariant basis property, {\it Topology} {\bf 5} (1966), 215-228.

\bibitem{csa} P. Gille and T. Szamuely, {\it Central Simple Algebras and Galois Cohomology} Cambridge Studies in Advanced Mathematics, vol. 101, Cambridge University Press, Cambridge, 2006.


\bibitem{kussintwo} D. Kussin, Factorial algebras, quaternions and preprojective algebras, {\it Algebras and modules, II (Geiranger, 1996),} 393-402, CMS Conf. Proc., 24, {\it Amer. Math. Soc., Providence, RI,} 1998.

\bibitem{kussin} D. Kussin, Noncommutative curves of genus zero: related to finite dimensional algebras, {\it Mem. Amer. Math. Soc.} {\bf 942} (2009), x+128pp.


\bibitem{lenzing} H. Lenzing and I. Reiten, Hereditary Noetherian categories of positive Euler characteristic, {\it Math. Z.} {\bf 254} (2006), 133-171.


\bibitem{decomp} I. Mori, On the Classification of Decomposable Quantum Ruled Surfaces, {\it Ring Theory 2007},
World Sci. Publ. (2009), 126-140.


\bibitem{tsen} A. Nyman, Noncommutative Tsen's theorem in dimension one, {\it J. Algebra} {\bf 434} (2015), 90-114.

\bibitem{duality} A. Nyman, Serre duality for non-commutative $\mathbb{P}^{1}$-bundles, {\it Trans. Amer. Math. Soc.} {\bf 357} (2005), 1349-1416.

\bibitem{nyman} A. Nyman, The geometry of arithmetic noncommutative projective lines, {\it J. Algebra} {\bf 414} (2014), 190-240.

\bibitem{reconstruction} A. L. Rosenberg, Reconstruction of Schemes, MPI Preprints Series, 1996
(108).


\bibitem{stafford} J.T. Stafford and M. Van den Bergh, Noncommutative curves and noncommutative surfaces, {\it Bull. Amer. Math. Soc.} (N.S.) {\bf 38} (2001), no. 2, 171-216.

\bibitem{oldvdb} M. Van den Bergh, The Brauer-Severi scheme of the trace ring of generic matrices, {\it Perspectives in ring theory} (Antwerp, 1987), 333-338, NATO Adv. Sci. Inst. Ser. C Math. Phys. Sci., 233, {\it Kluwer Acad. Publ., Dordrecht,} 1988.

\bibitem{vandenbergh}
M. Van den Bergh, Non-commutative $\mathbb{P}^{1}$-bundles over commutative schemes, {\it Trans. Amer. Math. Soc.} {\bf 364} (2012), 6279-6313.

\bibitem{quadrics}
M. Van den Bergh, Noncommutative quadrics, {\it Int. Math. Res. Not.} (2011), no. 17, 3983-4026.


\end{thebibliography}
\end{document}